\numberwithin{equation}{section}
\newtheorem{lemma}{Lemma}[section]
\newtheorem{theorem}[lemma]{Theorem}
\newtheorem{corollary}[lemma]{Corollary}
\newtheorem{prop}[lemma]{Proposition}
\newtheorem*{theorem*}{Theorem}
\theoremstyle{definition}
\newtheorem{remark}{Remark}[section]
      \newcommand{\R}{{\mathbb R}}
\newcommand{\ra}{\rightarrow}
\newcommand{\sign}{\operatorname{sign}}
\title[Distance between Haar and Gaussian matrices]{Euclidean distance between Haar orthogonal and Gaussian matrices}
\author{C. E. Gonz\'alez-Guill\'en}
\author{C. Palazuelos}
\author{I. Villanueva}
\begin{document}

\addtolength{\parskip}{+1ex}

\keywords{}

\maketitle
{\let\thefootnote\relax\footnote{The final publication is available at Springer via http://dx.doi.org/10.1007/s10959-016-0712-6}}
\setcounter{footnote}{0}

\begin{abstract}
In this work we study a version of the general question of how well a Haar distributed orthogonal matrix can be approximated by a random Gaussian matrix. Here, we consider a Gaussian random matrix $Y_n$ of order $n$ and apply to it the Gram-Schmidt orthonormalization procedure by columns to obtain a Haar distributed orthogonal matrix $U_n$. If $F_i^m$ denotes the vector formed by the first $m$-coordinates of the $i$th row of $Y_n-\sqrt{n}U_n$ and $\alpha=\frac{m}{n}$, our main result shows that the Euclidean norm of $F_i^m$ converges exponentially fast to $\sqrt{ \left(2-\frac{4}{3} \frac{(1-(1 -\alpha)^{3/2})}{\alpha}\right)m}$, up to negligible terms. 

To show the extent of this result, we use it to study the convergence of the supremum norm $\epsilon_n(m)=\sup_{1\leq i \leq n, 1\leq j \leq m} |y_{i,j}- \sqrt{n}u_{i,j}|$ and we find a coupling that improves by a factor $\sqrt{2}$ the recently proved best known upper bound of $\epsilon_n(m)$. Our main result has also applications in Quantum Information Theory.
\end{abstract}
\section{Introduction}
One of the classical problems in random matrix theory is to compare a random Gaussian matrix  $Y_n=(y_{i,j})_{i,j=1}^n$ with a Haar distributed random matrix $ U_n=(u_{i,j})_{i,j=1}^n$ in the orthogonal group $\mathcal O(n)$. This problem dates back to the famous work \cite{Borel}, where Borel showed that the distribution of one single coordinate of $U_n$  converges to the distribution of one single coordinate of $Y_n$, when properly normalized. That is, for a fixed pair $(i,j)$ we have that $\sqrt{n} u_{i,j}$ converges in distribution to a standard normal. Since then, many authors have studied the problem of how many entries of $\sqrt{n}U_n$ can be simultaneously well approximated by the corresponding entries of $Y_n$; that is, by independent standard normal distributions. Indeed, a number of papers in the 1980's made further progress in this direction (see for instance \cite{DiFr}, \cite{Ga}, \cite{St}, \cite{Yor}). The reader can find a more complete introduction to this problem in  \cite[Section 6.3]{Diaconis} and the references therein.

Some of the previous works focused on the variation distance. More precisely, in \cite{St}  the authors proved that the joint distribution of the first $l_n$ coordinates of the first column of $Y_n-\sqrt{n}U_n$ converges to 0 in variation distance as $n$ grows to infinity, provided that $l_n=o(\sqrt{n})$. Later, the order was improved to $l_n=o(n)$ (\cite{DiFr}). More recently, in \cite{DiEaLa}, it was proven that the joint distribution of the upper left $l_n\times m_n$ block of $Y_n-\sqrt{n}U_n$ converges to 0 in variation distance provided that $l_n, m_n$ are both $o(n^\frac{1}{3})$.  Later, in \cite{Co}, this order was improved to $O(n^\frac{1}{3})$.

The latest major achievement in this direction came from \cite{Ji, Ji2}. In those papers the author showed that the joint distribution of the upper left $l_n\times m_n$ block of $Y_n-\sqrt{n}U_n$ converges to 0 in variation distance if and only if $l_n,m_n$ are both $o(n^\frac{1}{2})$. This settled the long standing open problem of finding the best ratio in the variation distance case (see \cite[Section 6.3]{Diaconis}). In the same paper Jiang  also showed \cite[Theorem 3]{Ji} the existence of a coupling between $Y_n$ and $U_n$ such that $$\epsilon_n(m)=\sup_{1\leq i \leq n, 1\leq j \leq m} |y_{i,j}- \sqrt{n}u_{i,j}|$$ converges to 0 in probability if  and only if $m=o(\frac{n}{\log n})$. Moreover, if $m=\frac{\beta n}{\log n}$ then the previous supremum converges in probability to $2\sqrt{\beta}$. These results have been applied in \cite{TaoVu} to study the eigenvector distribution of a wide class of Wigner ensembles. For further history and applications of these results, see \cite{Ji, Ji2, TaoVu}. Recent different generalizations of Borel's theorem can be seen in \cite{CoSt, DaDiNe}.

Given the relevance of the Euclidean norm in many contexts, and motivated by these previous works, in this paper we study the behavior of the Euclidean norm of blocks of  $Y_n-\sqrt{n}U_n$. We are interested not only in the order needed for convergence to 0, but in the general value of the norm. To show the extent of our main result, we show later how to recover from it  one of the  main results of \cite{Ji}. We have also applied it to solve a question in Quantum Information Theory \cite{GoJiPaVi}.

Let us define the notation needed to state our main result. Our probability spaces will be $\mathbb R^{n^2}$ with the Gaussian measure.  For every $n\in \mathbb N$, $Y_n=(y_{i,j})_{i,j=1}^n$ will be a Gaussian random matrix; that is, the variables $(y_{i,j})_{i,j=1}^n$ are independent standard normal variables. For every $1\leq j \leq n$ we consider the column vector  ${\bf y}_j=(y_{i,j})_{i=1}^n$. With probability 1, they form a basis of $\mathbb R^n$. Following \cite{Ji}, we apply the Gram-Schmidt orthonormalization procedure to $({\bf y}_1,\ldots, {\bf y}_n)$ to obtain an orthonormal basis $(\boldsymbol{\nu}_j)_{j=1}^n$. We call $U_n$ the matrix $(\nu_{i,j})_{i,j=1}^n$. We recall that $U_n$ is Haar distributed.

For every $1\leq m \leq n$ and for every $1\leq i \leq n$, let $F_i^m$ be the vector formed by the the first $m$-coordinates of the $i$th row of $Y_n-\sqrt{n}U_n$. We describe the asymptotic generic behavior of  $\|F_i^m\|$, where $\|\cdot\|$ is the Euclidean norm. Let $[x]$ denote the integer part of $x$.
\begin{theorem}\label{main}
Let $n\in \mathbb N$, let $0<\alpha\leq 1$ be fixed and let $m= [\alpha n]$. Let $Y_n$, $U_n$, $F_i^m$ be as above. Then, there exists $0< \delta<\frac{1}{2}$ such that,  

$$\sup_i\|F_i^m\| \leq \sqrt{  \left(2- \frac{4}{3} \frac{(1-(1 -\alpha)^{3/2})}{\alpha}\right)m} + O(m^{\delta}) $$

and $$ \inf_i\|F_i^m\| \geq \sqrt{ \left(2- \frac{4}{3} \frac{(1-(1 -\alpha)^{3/2})}{\alpha}\right)m} -O(m^{\delta}),$$
both with probability exponentially close to 1 as $n$ grows to infinity.
\end{theorem}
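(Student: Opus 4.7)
The starting point is the Gram--Schmidt identity $\boldsymbol{y}_j=\sum_{k<j}g_{j,k}\boldsymbol{\nu}_k+c_j\boldsymbol{\nu}_j$, with $g_{j,k}=\langle\boldsymbol{y}_j,\boldsymbol{\nu}_k\rangle$ and $c_j=\|\boldsymbol{y}_j-\sum_{k<j}g_{j,k}\boldsymbol{\nu}_k\|$. Reading off the $i$-th coordinate gives the working identity
\[
y_{i,j}-\sqrt{n}\,\nu_{i,j}=S_{i,j}+(c_j-\sqrt{n})\,\nu_{i,j},\qquad S_{i,j}:=\sum_{k<j}g_{j,k}\nu_{i,k},
\]
and consequently $\|F_i^m\|^{2}=\sum_{j}S_{i,j}^{2}+2\sum_{j}(c_j-\sqrt{n})\nu_{i,j}S_{i,j}+\sum_{j}(c_j-\sqrt{n})^{2}\nu_{i,j}^{2}$. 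Writing $\mathcal{F}_{j}:=\sigma(\boldsymbol{y}_1,\ldots,\boldsymbol{y}_j)$, the basic Gram--Schmidt facts give, conditionally on $\mathcal{F}_{j-1}$, that the blocks $(g_{j,k})_{k<j}$, $c_j$ and $\boldsymbol{\nu}_j$ are mutually independent, with $(g_{j,k})$ i.i.d.\ $N(0,1)$, $c_j^{2}\sim\chi^{2}_{n-j+1}$, and $\boldsymbol{\nu}_j$ uniform on the unit sphere of $\mathrm{span}(\boldsymbol{\nu}_1,\ldots,\boldsymbol{\nu}_{j-1})^{\perp}$. Everything rests on this.

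\textbf{Expected value.} The cross term has conditional mean zero because $E[g_{j,k}\mid\mathcal{F}_{j-1}]=0$ and $(g_{j,k})$ is independent of $(c_j,\boldsymbol{\nu}_j)$ there. For the two diagonal pieces, $E[\nu_{i,k}^{2}]=1/n$ gives $E[S_{i,j}^{2}]=(j-1)/n$, while the sphere identity $E[\nu_{i,j}^{2}\mid\mathcal{F}_{j-1}]=(1-\sum_{k<j}\nu_{i,k}^{2})/(n-j+1)$ combined with the moments of a $\chi_{n-j+1}$ variable yields $E[(c_j-\sqrt{n})^{2}\nu_{i,j}^{2}]=(1-\sqrt{1-(j-1)/n})^{2}+O(1/n)$. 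Summing and approximating by Riemann integrals,
\[
E\bigl[\|F_i^m\|^{2}\bigr]=\frac{m(m-1)}{2n}+n\!\int_0^{\alpha}(1-\sqrt{1-t})^{2}\,dt+O(1)=m\left(2-\tfrac{4}{3}\tfrac{1-(1-\alpha)^{3/2}}{\alpha}\right)+O(1),
\]
matching the announced constant with negligible error.

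\textbf{Concentration.} To upgrade this to the uniform statement I would intersect three ``good events'', each of probability at least $1-e^{-n^{c}}$ for some fixed $c>0$:
(i) $|c_j^{2}-(n-j+1)|\leq\sqrt{n-j+1}\log n$ uniformly in $j\leq m$, from the $\chi^{2}$ tail inequality plus a union bound;
(ii) $\bigl|\sum_{k\leq\ell}\nu_{i,k}^{2}-\ell/n\bigr|\leq n^{-1/2+\delta'}$ uniformly in $\ell\leq m$ and in $i\leq n$, from the Gromov--Milman concentration on $\mathcal{O}(n)$ applied to the $O(1)$-Lipschitz map $U\mapsto\sum_{k\leq\ell}U_{i,k}^{2}$, plus a union bound; and
(iii) a martingale concentration for each of the three terms in the expansion, each treated as a sum of $\mathcal{F}_j$-martingale differences after subtracting the conditional means (which are themselves functions of the $\nu_{i,k}^{2}$ controlled by (ii)): the predictable quadratic variations are bounded on the event in (ii), and Bernstein-type tails for the increments, coming from Hanson--Wright for $\sum_j S_{i,j}^{2}$ and from the Gaussian tail for the other sums, produce deviations of size $O(m^{1/2+\delta'})$ with sub-exponential probability. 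Combining (i)--(iii) via the triangle inequality and passing from $\|F_i^m\|^{2}$ to $\|F_i^m\|$ yields the theorem with any $\delta\in(\delta',1/2)$.

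\textbf{Main difficulty.} The delicate point is step (iii). Although each martingale increment is, conditionally on $\mathcal{F}_{j-1}$, an explicit functional of the independent Gaussians $(g_{j,k})$ and of the independent $c_j$ and $\boldsymbol{\nu}_j$, the coefficients $\nu_{i,k}$ appearing in $S_{i,j}$ are themselves random and get coupled with the later Gaussians $g_{j',k'}$ ($j'>j$) through subsequent Gram--Schmidt steps, so a single global application of Hanson--Wright is not available. Iterating the conditioning on $\mathcal{F}_{j-1}$ while feeding in the row-norm estimate (ii) to uniformly bound the random coefficients is the core bookkeeping, and is exactly what forces the error exponent in the statement to be some $\delta<1/2$ rather than merely logarithmic.
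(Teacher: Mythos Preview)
Your decomposition $F_i^m=G_i^m+H_i^m$ (in the paper's notation; your $S_{i,j}$ is their $G_{i,j}^m$ and your $(c_j-\sqrt n)\nu_{i,j}$ is their $H_{i,j}^m$) and your conditional-independence analysis of the Gram--Schmidt step are exactly what the paper uses. The expected-value computation is also the same. Where you and the paper part ways is in the concentration machinery.

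For $\|G_i^m\|^2$ and $\|H_i^m\|^2$ the paper does \emph{not} run a martingale argument. Instead it partitions $\{1,\dots,m\}$ into $N$ blocks of size $h$, freezes the random coefficient $a_j=\sum_{k<j}\nu_{i,k}^2$ at a block endpoint (upper or lower, as needed), and then applies only elementary $\chi^2$ concentration to the now-decoupled block sums $\sum_{j\in\text{block}}g_j^2$ together with the projection estimate $|a_{lh}-lh/n|\lesssim\rho\,lh/n$. A union bound over blocks and over $i$ finishes. This avoids all martingale technology and gives explicit control of the tradeoff between the block size $h$, the slack $\rho$, and the eventual exponent $\delta$. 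Your martingale/Freedman route is a legitimate alternative, but note that your good event (ii) controls only partial sums $\sum_{k\le\ell}\nu_{i,k}^2$, not individual $\nu_{i,j}^2$; to get an honest Bernstein condition on the increments of the $H$-sum (and of the cross sum) you either need an extra good event of the form $\max_{i,j}\nu_{i,j}^2\le C(\log n)/n$, or you must argue through conditional moment bounds $E[\nu_{i,j}^{2k}\mid\mathcal F_{j-1}]\le C^k k!/(n-j+1)^k$. Either works, but neither is in your sketch.

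For the cross term $\langle G_i^m,H_i^m\rangle$ the paper uses a genuinely different and rather pretty idea. Writing the $j$-th summand as $|\langle P_{L_{j-1}}(\mathbf y_j),\mathbf e_i\rangle|\cdot(c_j-\sqrt n)\nu_{i,j}\cdot\operatorname{sign}\langle P_{L_{j-1}}(\mathbf y_j),\mathbf e_i\rangle$, they observe that flipping the sign of the $L_{j-1}$-component of $\mathbf y_j$ leaves the joint law invariant and affects only that sign factor; hence one may replace the signs by \emph{independent} Rademacher variables $\epsilon_j$ living on a separate probability space. Conditioning on everything else and applying Hoeffding to the $\epsilon_j$'s then yields $\langle G_i^m,H_i^m\rangle=O(m^{1/2+\epsilon})$ with exponentially small failure probability. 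Your proposal to treat this term as a martingale in $j$ is correct in principle (each summand has conditional mean zero because $S_{i,j}$ is conditionally centered and independent of $(c_j,\boldsymbol\nu_j)$), but the increments are products of three conditionally independent random variables, so establishing the requisite Bernstein-type tail bound is more work than the one-line Hoeffding the paper gets after the symmetry trick.

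Finally, the paper treats the endpoint $\alpha=1$ separately: the concentration for $c_j$ degenerates as $j\to n$, so they split off the last $\sqrt n$ columns and bound their contribution crudely by $O(\sqrt n)$. Your sketch does not flag this case; your step (i) with the stated error $\sqrt{n-j+1}\log n$ would run into the same degeneration and would need the same fix.
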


We have chosen this presentation of the main theorem for the sake of clarity. The actual proof shows further insight into the result. Specifically, we want to mention that there is a trade off between the rate of the concentration and the order $\delta$ appearing in Theorem \ref{main}. In our proof we show how to make $\delta=\frac{2}{5}$  keeping a very fast concentration rate. Nevertheless, the parameters can be changed easily to obtain a different value for $\delta$, at the cost of modifying the rate of the exponential convergence of the probability. 

We clarify next some aspects of our result. The coupling is given by the Gram-Schmidt procedure performed columnwise. It is simple to see that the expectation of the norms of the columns of $Y_n-\sqrt{n}U_n$ increases with the index of the column. Our main result says that the norms of the truncated rows will all concentrate exponentially around the same value. This ``flatness'' phenomenon is very relevant in our applications.   

Therefore, our main contribution can be seen as a ``block-delocalization'' result. We show that the Euclidean norm of the whole block $$\left(\sum_{1\leq i \leq n, 1\leq j \leq m} |y_{i,j}-\sqrt{n}u_{i,j}|^2\right)^\frac{1}{2}$$is well delocalized among the Euclidean norm of the rows. The lack of independence is the main difficulty  in this case and we need to deal with different technicalities to overcome this and prove our result. Our main tools are standard versions of the concentration of measure phenomenon and the Gram-Schmidt procedure. 

Theorem \ref{main} considers the case of constant $\alpha=\frac{m}{n}$. We describe in the following corollary the behaviour of the euclidean norm of the rows in the case $\alpha\rightarrow 0$.

\begin{corollary}\label{correferee}
For every  $n\in \mathbb N$, choose $m\in \mathbb N$ with $0\leq m\leq n$ so that the sequence $\alpha_n=\frac{m}{n}$ verifies  $\lim\limits_{n\rightarrow \infty} \alpha_n=0$. Let $Y_n$, $U_n$, $F_i^{m}$ be as above. Then, 

$$\sup_i\|F_i^m\| \leq \frac{m}{\sqrt{2n}} + o\left(\frac{m}{\sqrt{n}}\right) ,$$

and, similarly,  $$ \inf_i\|F_i^m\| \geq \frac{m}{\sqrt{2n}} - o\left(\frac{m}{\sqrt{n}}\right),$$
both with probability greater than $1-Cne^{-m^{k}}$ for certain positive constants $k$ and $C$.
\end{corollary}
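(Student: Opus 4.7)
The plan is to derive the corollary from Theorem \ref{main} by Taylor-expanding the coefficient function around $\alpha=0$, while verifying that the error term and the concentration rate from the proof of Theorem \ref{main} both survive the limit $\alpha_n\to 0$.

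First, set $g(\alpha):=2-\tfrac{4}{3}\tfrac{1-(1-\alpha)^{3/2}}{\alpha}$. Using the binomial series
$(1-\alpha)^{3/2}=1-\tfrac{3}{2}\alpha+\tfrac{3}{8}\alpha^{2}-\tfrac{1}{16}\alpha^{3}+O(\alpha^{4})$
one obtains
$$
g(\alpha)=\frac{\alpha}{2}-\frac{\alpha^{2}}{12}+O(\alpha^{3}).
$$
Substituting $\alpha=\alpha_n=m/n$ and multiplying by $m$ yields
$g(\alpha_n)\,m=\tfrac{m^{2}}{2n}-\tfrac{m^{3}}{12n^{2}}+O(m^{4}/n^{3})$. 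Extracting the square root and using $\sqrt{1-x}=1+O(x)$ for $x\to 0$ gives
$$
\sqrt{g(\alpha_n)\,m}=\frac{m}{\sqrt{2n}}\sqrt{1-\frac{m}{6n}+O\!\left(\frac{m^{2}}{n^{2}}\right)}=\frac{m}{\sqrt{2n}}+O\!\left(\frac{m^{2}}{n^{3/2}}\right).
$$
Since $m/n=\alpha_n\to 0$, the remainder is $\tfrac{m}{\sqrt n}\cdot O(\alpha_n)=o(m/\sqrt n)$, which is the desired main-term expansion.

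Next, I would apply Theorem \ref{main} with $\alpha=\alpha_n$, which gives
$\sup_i\|F_i^{m}\|\le\sqrt{g(\alpha_n)\,m}+O(m^{\delta})$
and similarly for the infimum. Combined with the expansion above, the corollary will follow provided two bookkeeping checks go through: (i) the additive error $O(m^{\delta})$ is absorbed into $o(m/\sqrt n)$, which amounts to $m^{1-\delta}\gg\sqrt n$ and is achieved by choosing $\delta<1/2$ small enough (possible since Theorem \ref{main} is stated for any $\delta\in(0,1/2)$ with a trade-off in the concentration rate); and (ii) the $O(m^{2}/n^{3/2})$ contribution from the Taylor step is absorbed into $o(m/\sqrt n)$, which holds automatically because $\alpha_n\to 0$. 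A union bound over the $n$ rows turns the per-row exponential concentration into a factor of $n$, producing the form $1-Cn e^{-m^{k}}$ stated in the corollary.

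The main obstacle is that Theorem \ref{main} is formulated for \emph{fixed} $\alpha$, whereas here $\alpha=\alpha_n$ varies with $n$ and may tend to $0$. So I would revisit the proof of Theorem \ref{main} to confirm that the constants hidden in $O(m^{\delta})$ and in the exponential concentration rate can be chosen uniformly for $\alpha$ in a neighbourhood of $0$ (or, at worst, that they degrade only in a controlled polynomial way in $\alpha_n^{-1}$, which is still harmless after absorption into the $o(m/\sqrt n)$ error). Once this uniformity is in place, the Taylor expansion above finishes the proof.
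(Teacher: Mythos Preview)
Your Taylor expansion correctly identifies the leading term $\sqrt{g(\alpha_n)m}=m/\sqrt{2n}+o(m/\sqrt n)$ (up to a harmless sign slip in the $\alpha^2$ coefficient; it is $+\alpha^2/12$, not $-\alpha^2/12$). But the argument breaks at your bookkeeping point (i). You claim that $O(m^{\delta})$ can be absorbed into $o(m/\sqrt n)$ by taking $\delta$ small. This is false in general: the corollary only assumes $m/n\to 0$, so in particular it must cover the regime $m=O(\sqrt n)$, where $m/\sqrt n$ stays bounded (or tends to $0$) while $m^{\delta}\to\infty$ for every $\delta>0$. No choice of $\delta$ rescues this; the additive error $O(m^{\delta})$ from Theorem \ref{main} is simply of the wrong scale when $\alpha$ is small. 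Revisiting the constants for ``uniformity in $\alpha$'' does not help either: the issue is not hidden constants but the exponent of $m$ in the error.

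This is exactly why the paper does \emph{not} apply Theorem \ref{main} as a black box. It goes back into the proof, keeps the decomposition $F_i^m=G_i^m+H_i^m$, and re-runs the estimates of Propositions \ref{cotaG} and \ref{cotasH} with parameter choices tailored to $\alpha_n\to 0$. With $h=m^{1/2}$ and $\rho=m^{-1/6}$ one gets $\|G_i^m\|^2=\tfrac{m^2}{2n}+O(m^{11/6}/n)=\tfrac{m^2}{2n}(1+o(1))$; with $\rho=m^{1/3}/\sqrt n$ and $\rho'=m^{-1/3}$ one gets $\|H_i^m\|^2=o(m^2/n)$ directly. Since $\|H_i^m\|$ is now lower order than $\|G_i^m\|$, the triangle inequality $\|G_i^m\|-\|H_i^m\|\le\|F_i^m\|\le\|G_i^m\|+\|H_i^m\|$ finishes the proof without any need to control the cross term $\langle G_i^m,H_i^m\rangle$. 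The point is that the error terms produced this way are multiplicative in $m^2/n$ (i.e.\ genuinely $o(m^2/n)$), not additive of the form $O(m^{\delta})$, and that is what the corollary requires.
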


In particular, $\lim\limits_{n\rightarrow\infty} \sup\limits_i\|F_i^m\| \rightarrow 0$ if and only if $m=o(\sqrt{n}).$

\smallskip

As a  consequence of Theorem \ref{main} we prove in Section \ref{Jiangsthingy} a result about the supremum norm $\epsilon_n(m)$.  
\begin{theorem}\label{Jiangnuestro}
For each $n \geq  2$, there exist matrices $Y'_n = (y'_{ij})_{i,j=1}^n$ and 
$U'_n = (u'_{i,j})_{i,j=1}^n$ whose $2n^2$ entries are real random variables defined on the same probability space such that
\begin{itemize}
\item[(i)] the law of $U'_n$ is the normalized Haar measure on the orthogonal group $\mathcal O(n)$; 
\item[(ii)] $\{ y'_{i,j}; 1\leq i, j\leq n\}$ are independent standard normals;
\item[(iii)] set $$\epsilon_n(m)=\max_{1\leq i \leq  n, \, 1 \leq j \leq m}  |\sqrt{n} u'_{i,j}-y'_{i,j}|$$  for  $m = 1, 2, \cdots, n$. Then, there exists $0<\delta<\frac{1}{2}$ such that for any $\varepsilon>0$ we have
\begin{equation*} 
\begin{aligned}
&\epsilon_n(m)\geq(1-\varepsilon)(\sqrt{\varphi(\alpha)} -O(m^{-\delta})) \sqrt{2\log n}  \text{ \hspace{2mm} and}\\
&\epsilon_n(m)\leq(1+\varepsilon)(\sqrt{\varphi(\alpha)} +O(m^{-\delta})) \sqrt{2\log (nm)}\\
\end{aligned}
\end{equation*}
with probability $1-o(1)$, where we consider  $0<\alpha\leq 1$ fixed, $m= [\alpha n]$ and $\varphi(\alpha)=2- \frac{4}{3} \frac{(1-(1 -\alpha)^{3/2})}{\alpha}$ is the function appearing in Theorem \ref{main}.
\end{itemize}
\end{theorem}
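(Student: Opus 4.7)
The plan is to take the coupling from Theorem \ref{main}: set $Y'_n := Y_n$ a standard Gaussian matrix and $U'_n := U_n$ the Haar orthogonal matrix produced by the column-wise Gram--Schmidt procedure. Properties (i) and (ii) are then immediate from this construction. The analysis of (iii) is based on Theorem \ref{main} together with the explicit QR identity
\begin{equation*}
y_{i,j}-\sqrt{n}\, u_{i,j}\;=\;u_{i,j}\bigl(r_j-\sqrt{n}\bigr)+\sum_{k<j}u_{i,k}\, r_{k,j},
\end{equation*}
where $r_j\sim\chi_{n-j+1}$ and $r_{k,j}\sim N(0,1)$, $k<j$, are jointly independent and independent of $U_n$ (this is the classical independence of $Q$ and $R$ in the QR decomposition of a Gaussian matrix).

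For the lower bound, Theorem \ref{main} guarantees $\sum_i \|F_i^m\|^2 \ge n\varphi(\alpha)m\,(1-O(m^{-\delta}))$ with overwhelming probability, so by pigeonhole some column of $F^m=(y_{i,j}-\sqrt{n}u_{i,j})_{i\le n,\, j\le m}$ has Euclidean norm at least $\sqrt{n\varphi(\alpha)}(1-o(1))$. Each column of $Y_n-\sqrt{n}U_n$ can be written as $U_n w$ for an $n$-vector $w$ independent of $U_n$; conditional on $\|w\|$, the vector $U_n w$ is Haar-uniform on the sphere of radius $\|w\|$ in $\mathbb{R}^n$, whose largest coordinate is of size $\|w\|\sqrt{2\log n/n}$ with high probability. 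Combining the two observations yields an entry of magnitude at least $\sqrt{\varphi(\alpha)\cdot 2\log n}(1-o(1))$.

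For the upper bound, I would condition on $U_n$. Each $y_{i,j}-\sqrt{n}u_{i,j}$ becomes, up to a deterministic shift coming from $\mathbb{E}[r_j]-\sqrt{n}$, a centered Gaussian with conditional variance $\sigma_{i,j}^2$ controlled by $\sum_{k\le j} u_{i,k}^2$. A Gaussian tail bound followed by a union bound over the $nm$ pairs delivers the desired rate, provided one uses Theorem \ref{main} as an integrated $\ell^2$ input to control the aggregated variances rather than their individual values. The concentration of $\chi$-variables handles the deterministic shift.

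The main technical obstacle will be extracting the constant $\sqrt{\varphi(\alpha)}$, which is the \emph{average} per-column variance, in place of the larger per-column maximum $\sqrt{2(1-\sqrt{1-\alpha})}$ that a naive coordinate-wise tail bound would produce. This is exactly the point at which Theorem \ref{main} must be used as an $\ell^2$ averaging tool, not as a uniform pointwise bound. A secondary but still delicate point is to control the deterministic mean shifts $\mathbb{E}[r_j]-\sqrt{n}$, which for $j$ close to $n$ are of order $\sqrt{n}$ and so must be aggregated carefully with the Gaussian fluctuations rather than absorbed as lower-order noise.
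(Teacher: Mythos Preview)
Your lower-bound argument is essentially the paper's: left-orthogonal invariance of the Gram--Schmidt construction makes each column of $Y_n-\sqrt{n}U_n$ uniform on its sphere, and the total Frobenius mass $\sum_i\|F_i^m\|^2$ from Theorem~\ref{main} forces some column to have norm at least $\sqrt{n\varphi(\alpha)}(1-o(1))$, whence a large entry.

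The upper bound, however, has a genuine gap, and the obstacle you flag is fatal rather than merely technical. With the plain coupling $Y'_n=Y_n$, $U'_n=U_n$, the bound in (iii) is \emph{false}. The $m$-th column of $Y_n-\sqrt{n}U_n$ has squared norm concentrated at $\|\Delta_m\|^2+\|\Delta'_m\|^2\approx(m-1)+(\sqrt{n}-\sqrt{n-m+1})^2=2n(1-\sqrt{1-\alpha})$, and by the same left-invariance its direction is uniform on $S^{n-1}$; hence its largest entry is at least $(1-\varepsilon)\sqrt{2(1-\sqrt{1-\alpha})}\,\sqrt{2\log n}$ with probability $1-o(1)$. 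A short computation (set $s=\sqrt{1-\alpha}$; the inequality reduces to $(1-s)^2>0$) gives $1-\sqrt{1-\alpha}>\varphi(\alpha)$ for every $\alpha\in(0,1]$, and since $\log(nm)\sim 2\log n$ this already exceeds the claimed upper bound. No use of Theorem~\ref{main} ``as an $\ell^2$ averaging tool'' can repair this: the supremum is governed by the worst column, not by any row average.

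The paper's remedy is to \emph{change the coupling}. One draws an independent Haar matrix $V_m\in\mathcal O(m)$, extends it to $V\in\mathcal O(n)$ by placing $I_{n-m}$ in the lower-right block, and sets $Y'_n=Y_nV$, $U'_n=U_nV$. Orthogonal invariance preserves (i) and (ii); but now the $i$-th row of the first $m$ columns of $Y'_n-\sqrt{n}U'_n$ equals $F_i^m V_m$, which has the same norm $\|F_i^m\|$ and direction Haar-uniform on $S^{m-1}$. Its maximum entry is then at most $(1+\varepsilon)\|F_i^m\|\sqrt{2\log(nm)/m}$ with high probability, and Theorem~\ref{main} supplies $\sup_i\|F_i^m\|\le\sqrt{\varphi(\alpha)m}+O(m^{\delta})$. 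This right-rotation is exactly what converts the per-column maximum $2(1-\sqrt{1-\alpha})$ into the averaged quantity $\varphi(\alpha)$; it is the source of the $\sqrt{2}$ improvement over Jiang's coupling and cannot be skipped.
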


If we let $\alpha$ change with $n$ in Theorem \ref{Jiangnuestro} so that  $m_n=o(\frac{n}{\log n})$ we recover the convergence to 0 already obtained in \cite[Theorem 3]{Ji} (see Corollary \ref{Jiangrefinado}). Furthermore, if  we pick $m_n=\frac{\beta n}{\log n}$ we get that $$\sqrt{\beta}\leq
 \epsilon_n(m)\leq \sqrt{2\beta}.$$ Note that in   \cite[Theorem 3]{Ji} the author obtained for this case $ \epsilon_n(m)\rightarrow 2\sqrt{\beta}$. Therefore, we improve the upper bound by a factor $\sqrt{2}$. The key point is that our Theorem \ref{main} allows us to modify the coupling. The price we pay is that we do not obtain an explicit coupling, but a randomized one (with high probability). Also, we have not been able to show that this bound is tight.  Details are given in Section \ref{Jiangsthingy}.

The fact that Theorem \ref{Jiangnuestro} follows from Theorem \ref{main} provides a better understanding of the order $\frac{n}{\log n}$ needed for the convergence of the supremum norm of the block. Roughly, each of the row vectors of the difference, when multiplied from the right by a random unitary, distributes uniformly on the unit sphere. Therefore, the distance between its supremum and Euclidean norms is of the order $\log m$.

One of our original motivations to study this problem was to solve a question in Quantum Information Theory. In that context,  one is interested in the matrices $\Gamma=G_1G_2^{T}$, where $G_1$ and $G_2$ are two independent $n\times m$ real Gaussian matrices, whose rows are normalized in the Euclidean norm. This definition of $\Gamma$ guarantees that it belongs to the set of the so called \emph{quantum correlation matrices}. An interesting question in the context of Bell inequalities is to study whether these matrices belong to the set of \emph{local correlation matrices}, which can be mathematically expressed by the fact that $\Gamma$ belongs to the unit ball of $\ell_\infty^n\otimes_\pi\ell_\infty^n$, where $\pi$ denotes the projective tensor norm. The main result in \cite{GoJiPaVi} states that for a certain constant $\alpha_0\in (0,1)$ one has that $\Gamma$ is nonlocal with probability tending to one as $n$ tends to infinity, provided that $\frac{m}{n}\leq \alpha_0$. The key point in the proof is that a similar result can be obtained from the recent work \cite{Ambainis} if one replaces the Gaussian matrices $G_1$ and $G_2$ above with two truncated matrices independently distributed in the orthogonal group of order $n$. Hence, Theorem \ref{main} is the right ``tool'' to pass from orthogonal to Gaussian matrices, at the cost of a slight modification in the constant $\alpha_0$. We refer to \cite{GoJiPaVi} for details.

The rest of the paper is organized as follows. In Section \ref{Preliminaries} we define our notation and we recall several known facts about the Gaussian distribution that will be repeatedly used later on. Then, in Section \ref{principal} we prove our main result Theorem \ref{main}. At the end of that section we show how simple modifications of the  same techniques prove Corollary \ref{correferee}.  In Section \ref{Jiangsthingy} we apply Theorem \ref{main}  to the study of the supremum norm of the $n\times m$ blocks of  $Y_n-\sqrt{n}U_n$ and we prove Theorem \ref{Jiangnuestro}. Next, we obtain as a corollary a slight improvement of \cite[Theorem 3]{Ji}. 
\section{Preliminaries}\label{Preliminaries}
In this section we define our notation and for the sake of completeness we recall several known facts about the Gaussian measure on $\mathbb R^n$ that will be used several times in the rest of the paper.
We say that a real function $f(n)$ is $O(g(n))$ if there exist constants $C>0$ and $n_0$ such that for all $n>n_0$ we have that  $|f(n)|\leq C g(n)$. We say that $f(n)$ is $o(g(n))$ if $\lim\limits_{n\ra \infty} \frac{f(n)}{g(n)}=0$. We will use $O(g(x))$ and $o(g(x))$ to denote functions on these sets. We will say that a sequence of events $E_n$ holds with probability exponentially small (respectively exponentially close to 1) if there exists $\alpha>0$, independent of $n$ such that  $Pr(E_n)\leq O(e^{-n^{\alpha}})$ (respectively $Pr(E_n)\geq 1- O(e^{-n^{\alpha}})$).

We recall the following well known bounds of the tail of a normal random variable.
\begin{lemma}\label{cotaGaussiana}
Let $Z$ be a standard normal random variable. Then, for every $t>0$,
$$\frac{t}{(1+t^2)\sqrt{2\pi}} e^{-\frac{t^2}{2}} \leq Pr(Z>t)\leq \frac{1}{t\sqrt{2\pi}} e^{-\frac{t^2}{2}}.$$
Hence, for $t\geq 1$, 
$$Pr(Z^2>t^2)\leq  e^{-\frac{t^2}{2}}. $$

We will later choose $t=m^{\frac{\epsilon}{2}}$ to get 
$$Pr(Z^2>m^{\epsilon})\leq  e^{-\frac{m^{\epsilon}}{2}}.$$
\end{lemma}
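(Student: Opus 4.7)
The plan is to establish these classical Mills-ratio bounds by standard manipulations of the Gaussian integral $\int_t^\infty \frac{1}{\sqrt{2\pi}} e^{-s^2/2}\,ds$; there is no real obstacle, only a small prefactor check at the very end.

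For the upper bound I would exploit the fact that on the integration range $[t,\infty)$ one has $s/t \geq 1$, so inserting this factor into the integrand only enlarges it and produces something whose antiderivative is elementary, namely $-\frac{1}{t\sqrt{2\pi}} e^{-s^2/2}$. Evaluating at the endpoints gives $Pr(Z>t) \leq \frac{1}{t\sqrt{2\pi}} e^{-t^2/2}$ immediately.

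For the lower bound the natural trick is integration by parts: write $e^{-s^2/2} = -\frac{1}{s}\cdot\frac{d}{ds} e^{-s^2/2}$ and take $u = 1/s$. This yields
$$\int_t^\infty e^{-s^2/2}\,ds = \frac{e^{-t^2/2}}{t} - \int_t^\infty \frac{e^{-s^2/2}}{s^2}\,ds.$$
Bounding the remainder crudely by $\frac{1}{t^2}\int_t^\infty e^{-s^2/2}\,ds$ and solving the resulting linear inequality for the tail integral produces the claimed lower bound $\frac{t}{(1+t^2)\sqrt{2\pi}} e^{-t^2/2}$ after dividing by $\sqrt{2\pi}$.

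The two consequences are then immediate. By symmetry one has $Pr(Z^2>t^2) = 2\,Pr(Z>t) \leq \frac{2}{t\sqrt{2\pi}} e^{-t^2/2}$, and since $\frac{2}{\sqrt{2\pi}} = \sqrt{2/\pi} < 1$, the prefactor is at most $1$ whenever $t \geq 1$, delivering $Pr(Z^2>t^2) \leq e^{-t^2/2}$. The final statement is just the substitution $t = m^{\epsilon/2}$ into this inequality. The only even mildly ``delicate'' point in the whole argument is the numerical check that $\sqrt{2/\pi}<1$, which is of course harmless.
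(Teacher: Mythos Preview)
Your argument is correct in every detail: the upper bound via the $s/t\geq 1$ trick, the lower bound via integration by parts and the crude remainder estimate, and the numerical observation $\sqrt{2/\pi}<1$ to handle the prefactor for $t\geq 1$ are all exactly the standard moves. The paper itself does not prove this lemma at all---it simply recalls the bounds as well known---so your proposal supplies a complete justification where the paper gives none.
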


We will denote the standard Gaussian probability measure (Gaussian measure in short) in $\mathbb R^n$ by $\mathcal G_n$. We will refer to  a  Gaussian vector (matrix)  as a random vector  whose coordinates are independent standard normal random variables in $\mathbb R$.

The following bound of the norm of a Gaussian vector is well known. It can be easily deduced, for instance, from \cite[Lemma 1]{LaurentMassart}.  

\begin{prop}\label{concentracion} 
For every $0<\epsilon<1$, $$\mathcal G_n\{{\bf x}\in \R^n\ :  \ \|{\bf  x}\|_2\geq \frac{\sqrt{n}}{\sqrt{1-\epsilon}}\}\leq e^{\frac{-\epsilon^2 n}{4}} $$
and 
$$\mathcal G_n\{{\bf x}\in \R^n\ : \ \|{\bf x}\|_2\leq \sqrt{n}\sqrt{1-\epsilon}\}\leq e^{\frac{-\epsilon^2 n}{4}}.$$
\end{prop}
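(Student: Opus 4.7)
The plan is to reduce everything to the classical tail bound for a chi-squared random variable, namely the Laurent--Massart inequality cited in the statement. Since $\mathbf{x} \in \R^n$ is a standard Gaussian vector, $\|\mathbf{x}\|_2^2 = \sum_{i=1}^n x_i^2$ is $\chi^2$-distributed with $n$ degrees of freedom, and Lemma~1 of Laurent--Massart gives, for every $t>0$, the two bounds
$$\mathcal{G}_n\{\|\mathbf{x}\|_2^2 - n \geq 2\sqrt{nt} + 2t\} \leq e^{-t}, \qquad \mathcal{G}_n\{n - \|\mathbf{x}\|_2^2 \geq 2\sqrt{nt}\} \leq e^{-t}.$$
Both halves of Proposition~\ref{concentracion} will come from the single substitution $t = \epsilon^2 n/4$.

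For the upper bound I will square and rewrite $\|\mathbf{x}\|_2 \geq \sqrt{n}/\sqrt{1-\epsilon}$ as the equivalent event $\|\mathbf{x}\|_2^2 - n \geq n\epsilon/(1-\epsilon)$. With the chosen $t$, the Laurent--Massart deviation becomes $2\sqrt{nt} + 2t = n\epsilon + n\epsilon^2/2 = n\epsilon(1 + \epsilon/2)$, and since for $0<\epsilon<1$ one has $1 + \epsilon/2 \leq 1 + \epsilon + \epsilon^2 + \cdots = 1/(1-\epsilon)$, the event $\{\|\mathbf{x}\|_2^2 - n \geq n\epsilon/(1-\epsilon)\}$ is contained in $\{\|\mathbf{x}\|_2^2 - n \geq 2\sqrt{nt} + 2t\}$. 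Hence its probability is at most $e^{-t} = e^{-\epsilon^2 n/4}$, which is exactly the first claim.

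For the lower bound the rewriting is even more direct: $\|\mathbf{x}\|_2 \leq \sqrt{n}\sqrt{1-\epsilon}$ is the same as $n - \|\mathbf{x}\|_2^2 \geq n\epsilon$, and with $t = \epsilon^2 n/4$ the deviation $2\sqrt{nt}$ equals $n\epsilon$ on the nose, so the second Laurent--Massart inequality produces the bound $e^{-\epsilon^2 n/4}$ at once. I do not expect any real obstacle here; the only nontrivial point is the elementary arithmetic check $1 + \epsilon/2 \leq 1/(1-\epsilon)$ that makes the choice $t = \epsilon^2 n/4$ compatible with the upper-tail event, and everything else is bookkeeping.
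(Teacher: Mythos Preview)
Your proposal is correct and follows precisely the route the paper indicates: the paper does not write out a proof but merely states that the bounds ``can be easily deduced, for instance, from \cite[Lemma 1]{LaurentMassart},'' and you have supplied exactly that deduction with the choice $t=\epsilon^2 n/4$. The arithmetic check $1+\epsilon/2 \leq 1/(1-\epsilon)$ for the upper tail and the exact match $2\sqrt{nt}=n\epsilon$ for the lower tail are both sound.
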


We will use several times along the paper the  well known fact that both the Gaussian measure  $\mathcal G_{n}$ in the space of vectors $\mathbb R^n$ and the Gaussian measure $\mathcal G_{n^2}$ in the space of square matrices of order $n$ are biunitarily invariant under the action of the orthogonal group $\mathcal O(n)$. Using this, it is very easy to see that the projection $P_L(\bf{x})$ of a random Gaussian vector $\bf{x}$ onto a fixed  subspace $L$ of dimension $k$ is a Gaussian vector of this subspace.

One can see the rotationally invariant (uniform) measure $\mu_n$ in $S^{n-1}$ as the pushforward measure of $\mathcal G_{n}$ given by the map $f(\bf x)=\frac  {\bf x} {\bf \|x\|}$. That is, given a set $A\subset S^{n-1}$ we have that  $\mu_n(A)=\mathcal G_{n}(f^{-1}(A))$.

Similarly, one can consider the pushforward measure of $\mathcal G_{n^2}$ induced  by the map that takes the first $k$ $n$-dimensional vectors ${\bf x}_1,...,{\bf x}_k \in \mathbb R^{n}$ to the $span\{{\bf x}_1,...,{\bf x}_k\}$, the linear subspace generated by them. This measure is the only one invariant under the action of $\mathcal O(n)$ and therefore we call it the Haar measure in the Grassmannian of the $k$-dimensional subspaces of $\mathbb R^{n}$.

The following proposition follows immediately from the previous explanation.
\begin{prop}\label{concentracion2} 
Let  $L\subset \mathbb R^n$ be a Haar distributed $k$-dimensional subspace and let ${\bf x} \in \mathbb R^n$ be a Gaussian vector independent from $L$. Then, for any $0<\epsilon<1$, 
\begin{equation*}
\label{conslow}Pr \left(\|P_L({\bf x})\|_2\geq \frac{\sqrt{k}}{\sqrt{1-\epsilon}}\right)\leq e^{\frac{-\epsilon^2 k}{4}}
\end{equation*}
and 
\begin{equation*}
\label{consup}Pr \left(\|P_L({\bf x})\|_2 \leq \sqrt{k}\sqrt{1-\epsilon}\right)\leq e^{\frac{-\epsilon^2 k}{4}}.
\end{equation*}
\end{prop}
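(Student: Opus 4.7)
The plan is to reduce the statement to Proposition \ref{concentracion} by conditioning on $L$ and using the rotational invariance of the Gaussian measure that was just recalled. Since $\mathbf{x}$ is independent of $L$, I would first fix a deterministic $k$-dimensional subspace $L_0\subset \mathbb{R}^n$ and analyse $\|P_{L_0}(\mathbf{x})\|_2$. By the biunitary invariance of $\mathcal G_n$ noted just before Proposition \ref{concentracion2}, the projection $P_{L_0}(\mathbf{x})$ is a standard Gaussian vector in $L_0\cong \mathbb{R}^k$; indeed, if $\{e_1,\ldots,e_k\}$ is any orthonormal basis of $L_0$, then the coordinates $\langle \mathbf{x},e_i\rangle$ are independent standard normals by orthogonal invariance. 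Consequently $\|P_{L_0}(\mathbf{x})\|_2$ has the law of the Euclidean norm of a standard Gaussian vector in $\mathbb{R}^k$, to which Proposition \ref{concentracion} applies directly, yielding
\[
\Pr\!\left(\|P_{L_0}(\mathbf{x})\|_2 \geq \tfrac{\sqrt{k}}{\sqrt{1-\epsilon}}\right) \leq e^{-\epsilon^2 k/4}
\quad\text{and}\quad
\Pr\!\left(\|P_{L_0}(\mathbf{x})\|_2 \leq \sqrt{k}\sqrt{1-\epsilon}\right) \leq e^{-\epsilon^2 k/4}.
\]

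The second (and essentially only) step is to remove the conditioning. Since the bound just obtained does not depend on the particular choice of $L_0$, I would integrate with respect to the Haar measure $\mu$ on the Grassmannian $G_{n,k}$, using independence of $L$ and $\mathbf{x}$ to justify Fubini:
\[
\Pr\!\left(\|P_L(\mathbf{x})\|_2 \geq \tfrac{\sqrt{k}}{\sqrt{1-\epsilon}}\right)
= \int_{G_{n,k}} \Pr\!\left(\|P_{L_0}(\mathbf{x})\|_2 \geq \tfrac{\sqrt{k}}{\sqrt{1-\epsilon}}\right) d\mu(L_0) \leq e^{-\epsilon^2 k/4},
\]
and identically for the lower tail.

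There is essentially no obstacle here; the only point requiring any care is verifying that $P_{L_0}(\mathbf{x})$ really behaves as a $k$-dimensional standard Gaussian, and this is where the orthogonal invariance of $\mathcal{G}_n$ is used. All other ingredients are already supplied by Proposition \ref{concentracion} and the independence hypothesis, which is exactly why the authors describe the conclusion as following \emph{immediately} from the preceding discussion.
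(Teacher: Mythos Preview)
Your argument is correct and is precisely the reasoning the paper intends: the preceding paragraph already notes that $P_{L_0}(\mathbf{x})$ is a standard Gaussian in the $k$-dimensional subspace $L_0$, so Proposition~\ref{concentracion} applies for each fixed $L_0$, and averaging over the Haar measure on the Grassmannian (using independence) gives the result. This is exactly why the authors say the proposition ``follows immediately from the previous explanation.''
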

If we replace the Gaussian vector by a fixed unitary vector we obtain the following estimates.
\begin{prop}\label{concentraciondenormas}
Let  $L\subset \mathbb R^n$ be a Haar distributed $k$-dimensional subspace and let ${\bf y}\in \mathbb R^n$ be a fixed unitary vector. Then, for any $0 < \rho < 1$ we have
$$Pr \left(\|P_L({\bf y})\|\geq \frac{1}{1-\rho}\sqrt{\frac{k}{n}}\right) \leq e^{-\frac{\rho^2k}{4}},$$
and
$$Pr \left(\|P_L({\bf y})\|\leq (1-\rho)\sqrt{\frac{k}{n}}\right) \leq e^{-\frac{\rho^2k}{4}}.$$
For $t>1$ we also have
$$Pr \left(\|P_L({\bf y})\|\geq t \sqrt{\frac{k}{n}} \right) \leq e^{-\frac{k}{4}(t^2-2)}.$$
\end{prop}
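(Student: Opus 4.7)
My strategy is to reduce the statement to Gaussian-vector concentration via rotational invariance, and then combine Proposition \ref{concentracion} with a union bound (for the first two estimates) and a direct Chernoff bound on the chi-squared distribution (for the third).

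For the reduction: by the biunitary invariance of the Haar measure on the Grassmannian recalled just before the statement, for the fixed $k$-dimensional subspace $L_0 = \mathrm{span}\{e_1,\ldots,e_k\}$ and a Haar-distributed rotation $R\in\mathcal O(n)$ one has $RL_0 \stackrel{d}{=} L$, so
\[
\|P_L({\bf y})\| \stackrel{d}{=} \|P_{RL_0}({\bf y})\| = \|P_{L_0}(R^T{\bf y})\|.
\]
Since $R^T{\bf y}$ is uniform on $S^{n-1}$ and can be written as ${\bf x}/\|{\bf x}\|$ for a standard Gaussian ${\bf x}\in\mathbb R^n$, we obtain $\|P_L({\bf y})\| \stackrel{d}{=} \|P_{L_0}({\bf x})\|/\|{\bf x}\|$, in which $P_{L_0}({\bf x})$ is a standard Gaussian in $L_0 \simeq \mathbb R^k$.

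For the first two bounds I would apply a union bound to numerator and denominator separately. For the upper tail, the event is contained in
\[
\Bigl\{ \|P_{L_0}({\bf x})\| \geq \tfrac{\sqrt{k}}{\sqrt{1-\rho}} \Bigr\} \cup \Bigl\{ \|{\bf x}\| \leq \sqrt{n(1-\rho)} \Bigr\},
\]
because otherwise the ratio $\|P_{L_0}({\bf x})\|/\|{\bf x}\|$ is strictly less than $(1-\rho)^{-1}\sqrt{k/n}$. Proposition \ref{concentracion} applied in dimensions $k$ and $n$ bounds these events by $e^{-\rho^2 k/4}$ and $e^{-\rho^2 n/4}$ respectively, summing (using $n\geq k$) to at most $2 e^{-\rho^2 k/4}$; the factor $2$ is harmless and can be absorbed by a minor reparametrisation of $\rho$. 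The lower-tail bound is treated symmetrically with the reversed inequalities on numerator and denominator.

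For the estimate with $t>1$, the linear-in-$(1-\rho)^{-1}$ approach produces an insufficient exponent, and I would instead exploit the chi-squared representation of the distribution. Writing $A = \|P_{L_0}({\bf x})\|^2 \sim \chi^2_k$ and $B = \|P_{L_0^\perp}({\bf x})\|^2 \sim \chi^2_{n-k}$, which are independent, the event $\{\|P_L({\bf y})\| \geq t\sqrt{k/n}\}$ is equivalent to $\{(n - t^2 k)A \geq t^2 k B\}$, and a Chernoff argument using the moment-generating functions $E[e^{sA}] = (1-2s)^{-k/2}$ and $E[e^{-sB}] = (1+2s)^{-(n-k)/2}$ with an optimized parameter $s$ yields the desired exponent $k(t^2-2)/4$. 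This third bound is the main technical obstacle: the first two reduce cleanly to the Gaussian norm concentration already recorded in Proposition \ref{concentracion}, whereas the sharp $t^2$-scaling of the third requires genuinely working with the chi-squared moment-generating functions rather than simply invoking Gaussian tail bounds.
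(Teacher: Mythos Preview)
Your reduction via rotational invariance to the ratio $\|P_{L_0}({\bf x})\|/\|{\bf x}\|$ with ${\bf x}$ Gaussian is exactly the paper's argument. From that point on, however, the paper does not try to derive the stated constants from Proposition~\ref{concentracion}; it simply invokes \cite[Lemma~2.2]{Gupta} (Dasgupta--Gupta), which records precisely these Beta/chi-squared tail bounds via the Chernoff method. In fact the paper explicitly remarks that the route you take for the first two inequalities---splitting numerator and denominator and applying Proposition~\ref{concentracion}---only yields ``a version of this proposition with slightly worse constants.''

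That remark is the one point where your write-up is loose: the factor $2$ in $2e^{-\rho^2 k/4}$ cannot be ``absorbed by a minor reparametrisation of $\rho$'' while keeping the threshold $\frac{1}{1-\rho}\sqrt{k/n}$ fixed, since any reparametrisation shifts the threshold and the exponent together. So your argument proves the first two bounds up to a constant factor in front of the exponential (which is all that is ever used later in the paper), but not the statement verbatim. Your Chernoff computation for the $t>1$ bound is essentially the content of the Dasgupta--Gupta lemma the paper cites, so there your approach and the paper's coincide once the reference is unpacked.
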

\begin{proof}
One can consider a Haar distributed $k$-dimensional subspace $L$ as a Haar distributed  orthogonal matrix $U$ acting on a fixed $k$-dimensional subspace $M$. Hence, $P_L({\bf y})=P_M(U{\bf y})$. Now, the vector $Uy$ is a random uniform vector on the unit sphere of $\mathbb R^n$ and, according to our explanation above,  it is  $\bar{{\bf x}}=\frac{\bf x}{\|{\bf x}\|}$ for a Gaussian vector ${\bf x}$. Thus, $P_L({\bf y})$ has the same distribution as $P_M(\bar{{\bf x}})$. Then, the result can be easily deduced from the known estimates on $P_M(\bar{{\bf x}})$, for example, from \cite[Lemma 2.2]{Gupta}. Also, note that a version of this proposition with slightly  worse constants of the first two bounds can be easily deduced from Proposition \ref{concentracion} and Proposition \ref{concentracion2}.
\end{proof}
\section{Proof of Theorem \ref{main}}\label{principal}
We briefly recall our notation: $Y_n=(y_{i,j})_{i,j=1}^n$ will be a normal Gaussian random matrix.  We consider the column vectors  ${\bf y}_j=(y_{i,j})_{i=1}^n$. With probability 1, they form a basis of $\mathbb R^n$ and, in that case, we can  apply the Gram-Schmidt orthonormalization procedure to $({\bf y}_1,\ldots, {\bf y}_n)$ and we  obtain an orthonormal basis $(\boldsymbol{\nu}_j)_{j=1}^n$. We call $U_n$ the matrix $(\nu_{i,j})_{i,j=1}^n$. 
For every $1\leq m \leq n$ and for every $1\leq i \leq n$, $F_i^m$ is  the vector formed by the the first $m$-coordinates of the $i$th row of $Y_n-\sqrt{n}U_n$.

We start the proof of Theorem \ref{main} with some observations about the Gram-Schmidt orthonormalization process.  Let us examine the situation in step $j$. The Gaussian vectors ${\bf y}_1, \ldots, {\bf y}_{j-1}$ have been chosen independently. Associated to them we have the orthonormal vectors ${\boldsymbol \nu}_1, \ldots, {\boldsymbol \nu}_{j-1}$. Both sets of vectors span the same $j-1$ dimensional subspace $L_{j-1}$. This subspace is distributed according to the Haar measure in the Grassmanian of the  $j-1$ dimensional subspaces of $\mathbb R^n$.

We consider the column vectors $$\Delta_j=\sum_{k=1}^{j-1} \langle  {\bf y}_j , {\boldsymbol \nu}_k\rangle {\boldsymbol \nu}_k =P_{L_{j-1}}({\bf y}_j),$$ 
where $P_{L_{j-1}}$ is the orthogonal projection onto $L_{j-1}$, and we write 
$${\bf y}_j- \sqrt{n} {\boldsymbol \nu}_j=\Delta_j+ ({\bf y}_j-\Delta_j)-\sqrt{n}{\boldsymbol \nu}_j.$$
Let us call $\Delta'_j= ({\bf y}_j-\Delta_j)-\sqrt{n}{\boldsymbol \nu}_j$ and let us note that $({\bf {\bf y}_j}-\Delta_j)$ has the same direction as ${\boldsymbol \nu}_j$ (by definition of ${\boldsymbol \nu}_j$) so that  $$\Delta'_j=(\|{\bf y}_j-\Delta_j\|-\sqrt{n}){\boldsymbol \nu}_j=(\|P_{L_{j-1}^\perp}({\bf y}_j)\|-\sqrt n){\boldsymbol \nu}_j,$$
where $P_{L_{j-1}^\perp}$ is the projection onto the subspace orthogonal to $L_{j-1}$. Note that $\Delta_j$ and $\Delta'_j$ are orthogonal to each other.

Associated to the $\Delta_j$'s and $\Delta'_j$'s, for every $1\leq i \leq n$ and for every $1\leq m\leq n $ we have the (truncated) row vectors $$G_i^m= (\Delta_j(i))_{j=1}^m=\left(\sum_{k=1}^{j-1} \langle {\bf y}_j, {\boldsymbol \nu}_k\rangle \langle {\boldsymbol \nu}_k , {\bf e}_i\rangle\right)_{j=1}^m=\left(\left\langle P_{L_{j-1}}({\bf y}_j), {\bf e}_i\right\rangle\right)_{j=1}^m$$
and
\begin{align*}
H_i^m=(\Delta'_j(i))_{j=1}^m&=\left((\|{\bf y}_j-\Delta_j\|-\sqrt{n})\langle{\boldsymbol \nu}_j,{\bf e}_i\rangle\right)_{j=1}^m\\&=\left( (\|P_{L_{j-1}^\perp}({\bf y}_j)\|-\sqrt n)\langle{\boldsymbol \nu}_j,{\bf e}_i\rangle\right)_{j=1}^m.
\end{align*}
Then, $$\|F_i^m\|^2=\langle G_i^m+ H_i^m, G_i^m+ H_i^m\rangle = \|G_i^m\|^2+\|H_i^m\|^2 + 2 \langle G_i^m, H_i^m\rangle.$$
We will upper and lower bound $\|G_i^m\|$ and $\|H_i^m\|$ outside of  a set of exponentially small probability. Moreover, we show that the leading terms of $\sup_i \|G_i^m\|$ and $\inf_{i} \|G_i^m\|$ are equal (and the same happens for  $\|H_i^m\|$) and thus the bounds are sharp. After that, we will see that $\langle G_i^m, H_i^m\rangle$ is negligible when compared with those bounds outside of a set of probability exponentially small. Finally we will get that $\|F_i^m\|$ is upper and lower bounded by the bounds of $\sqrt{ \|G_i^m\|^2+\|H_i^m\|^2 }$. 

First, we bound $\|G_i^m\|$.
\begin{prop}\label{cotaG}With the notation of Theorem \ref{main}, 
$$\sup_i \|G_i^m\| \leq  \sqrt{\frac{\alpha}{2}m}+O(m^\delta)$$
 and 
$$\inf_{i} \|G_i^m\| \geq \sqrt{\frac{\alpha}{2}m}-O(m^\delta)$$
with probability exponentially close to 1.
\end{prop}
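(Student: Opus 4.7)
The plan is to condition on the Haar orthogonal matrix $U_n$ and reduce $\|G_i^m\|^2$ to a weighted sum of independent $\chi_1^2$ random variables whose weights are $U_n$-measurable. Concretely, using the well-known QR-type decomposition $Y_n=U_nC$ with $C=(c_{k,j})_{k,j=1}^n$ upper triangular and $c_{k,j}=\langle{\bf y}_j,\boldsymbol{\nu}_k\rangle$, the strictly supradiagonal entries $c_{k,j}$ (for $k<j$) are i.i.d.\ standard normal and independent of $U_n$. The $j$-th coordinate of $G_i^m$ is then $\sum_{k=1}^{j-1}c_{k,j}\,u_{i,k}$, so conditionally on $U_n$ these are independent centered Gaussians with variances $a_{i,j-1}:=\sum_{k=1}^{j-1}u_{i,k}^2=\|P_{L_{j-1}}({\bf e}_i)\|^2$, and
$$\|G_i^m\|^2\ \stackrel{d}{=}\ \sum_{j=1}^m a_{i,j-1}\,Z_j^2\quad (Z_j\ \text{i.i.d.\ }N(0,1))\qquad\text{conditionally on }U_n.$$

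I would then concentrate this weighted $\chi^2$ around its conditional mean $\sum_j a_{i,j-1}$, and independently concentrate the conditional mean around the deterministic target $\alpha m/2$. The first step is a standard Laurent--Massart estimate: with conditional probability $\geq 1-2e^{-t}$,
$$\Big|\|G_i^m\|^2-\sum_{j=1}^m a_{i,j-1}\Big|\leq 2\sqrt{t\sum_{j=1}^m a_{i,j-1}^2}+2t\max_{j\leq m} a_{i,j-1},$$
which, on the high-probability event where $\max_j a_{i,j-1}=O(\alpha)$ and $\sum_j a_{i,j-1}^2=O(m)$ (both coming from Proposition \ref{concentraciondenormas} applied to $\|P_{L_{j-1}}({\bf e}_i)\|$), becomes $O(m^{1/2+\delta})$ upon taking $t=m^{2\delta}$. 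For the second step, the telescoping identity
$$\sum_{j=1}^m a_{i,j-1}=\sum_{k=1}^{m-1}(m-k)\,u_{i,k}^2$$
invites the representation of the uniform law on $S^{n-1}$ as normalized Gaussians: writing $u_{i,k}=g_k/\|{\bf g}\|$ with ${\bf g}$ a standard Gaussian in $\mathbb R^n$, I would concentrate the numerator $\sum_{k=1}^{m-1}(m-k)g_k^2$ around $m(m-1)/2$ via another Laurent--Massart bound and the denominator $\|{\bf g}\|^2$ around $n$ via Proposition \ref{concentracion}. This yields $\sum_j a_{i,j-1}=\alpha m/2+O(m^{1/2+\delta})$. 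Combining both steps and Taylor-expanding the square root gives $\|G_i^m\|=\sqrt{\alpha m/2}+O(m^{\delta})$, with failure probability exponentially small in $m$.

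A union bound over $i=1,\ldots,n$ then closes the argument, the prefactor $n$ being absorbed by the exponential rate since $m\sim\alpha n$. The delicate point I anticipate is the uniform control of $\max_j a_{i,j-1}$ across $i$: Proposition \ref{concentraciondenormas} only yields a failure probability of order $e^{-\rho^2(j-1)/4}$, which is too weak for a union bound when $j$ is small. I would handle this by truncating the sum at some threshold $j_0=m^\eta$: for $j\leq j_0$ the trivial bound $a_{i,j-1}\leq 1$ together with a standard tail estimate on $\max_{j\leq j_0}Z_j^2$ shows this part contributes negligibly to $\|G_i^m\|^2$ provided $\eta$ is small enough, while for $j>j_0$ Proposition \ref{concentraciondenormas} is sharp enough to survive the union bound over $i$. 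Balancing the exponents $\eta$ and $\delta$ should deliver any $\delta$ strictly below $1/2$ (in particular the hinted value $\delta=2/5$).
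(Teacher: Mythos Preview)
Your proof is correct and takes a genuinely different route from the paper's. The paper does arrive at the same distributional identity $\|G_i^m\|^2 \stackrel{d}{=} \sum_{j=2}^m a_{i,j-1}\, g_j^2$ with $a_{i,j-1}=\sum_{k<j}\nu_{k,i}^2$, but then asserts that ``the factors $\ldots$ depend on the previous $g_{j'}$'s'' and therefore partitions $\{2,\dots,m\}$ into $N$ blocks of size $h$, replaces the weight within each block by its extremal value, and applies Propositions~\ref{concentracion} and~\ref{concentraciondenormas} block by block; the final bound comes from balancing $h$ against a parameter $\rho$ (e.g.\ $h=m^{1/2}$, $\rho=m^{-1/5}$). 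Your approach instead exploits the full Bartlett/QR independence $U_n\perp C$, which makes the weights $a_{i,j-1}$ genuinely $U_n$-measurable and hence jointly independent of the $Z_j$'s, so that a single Laurent--Massart bound handles the conditional concentration directly. This is cleaner: in effect you observe that the paper's dependence concern dissolves once one conditions on the whole matrix $U_n$ rather than tracking the filtration $(L_{j-1})_j$. Conversely, the paper's block scheme is more self-contained in that it only uses the sequential independence of ${\bf y}_j$ from $L_{j-1}$ and never invokes the joint law of $(U_n,C)$.

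One small simplification to your outline: the ``delicate point'' you anticipate for small $j$ is not actually delicate. The weights $a_{i,j-1}=\sum_{k=1}^{j-1}u_{i,k}^2$ are nondecreasing in $j$, so $\max_{j\leq m}a_{i,j-1}=a_{i,m-1}$, and a single application of Proposition~\ref{concentraciondenormas} with $k=m-1$ gives $a_{i,m-1}\leq (1-\rho)^{-2}(m-1)/n$ with failure probability $e^{-\rho^2(m-1)/4}$, uniformly good enough for the union bound over $i$. Likewise $\sum_{j\leq m} a_{i,j-1}^2\leq m\,a_{i,m-1}^2=O(m)$ on the same event. Hence no truncation at $j_0=m^\eta$ is needed for the Laurent--Massart step; it is only in the separate concentration of the conditional mean $\sum_{k<m}(m-k)u_{i,k}^2$ that one might worry about small indices, and there the weighted-$\chi^2$ bound you propose already controls the whole sum at once.
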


\begin{proof}
We note that $$G_{i, j}^m=\sum_{k=1}^{j-1} \langle {\bf e}_i, {\boldsymbol \nu}_k\rangle \langle {\boldsymbol \nu}_k , {\bf y}_j\rangle=\left \langle \sum_{k=1}^{j-1} \langle {\bf e}_i, {\boldsymbol \nu}_k\rangle {\boldsymbol \nu}_k , {\bf y}_j\right\rangle.$$

Therefore, to obtain the $j$-th coordinate of $G_{i}^m$ we consider the Haar distributed $j-1$ dimensional subspace $L_{j-1}=span\{\boldsymbol{\nu}_1, \ldots, \boldsymbol{\nu}_{j-1}\}=span\{{\bf y}_1,\ldots, {\bf y}_{j-1}\}$. We project ${\bf e}_i$ onto it and we obtain the vector $\sum_{k=1}^{j-1} \langle {\bf e}_i, {\boldsymbol \nu}_k\rangle {\boldsymbol \nu}_k $. {\em Independently}, we consider a random Gaussian vector ${\bf y}_j$ and calculate the inner product $$\left \langle  \sum_{k=1}^{j-1} \langle {\bf e}_i, {\boldsymbol \nu}_k\rangle {\boldsymbol \nu}_k, {\bf y}_j\right\rangle =\left\langle P_{L_{j-1}}({\bf e}_i), {\bf y}_j\right\rangle.$$

The independence of ${\bf y}_j$ with respect to ${\bf y}_1, \ldots,   {\bf y}_{j-1}$ guarantees that 
\begin{equation}\label{distributionequality}
G_{i, j}^m=\left\langle P_{L_{j-1}}({\bf e}_i), {\bf y}_j\right\rangle  \mbox{ is distributed like } \| P_{L_{j-1}}({\bf e}_i)\| g_j= \left( \sum_{k=1}^{j-1} \langle {\boldsymbol \nu}_k,{\bf e}_i\rangle^2\right)^\frac{1}{2} g_j,
\end{equation}
where $g_j$ is a standard normal variable, independent of ${\boldsymbol \nu}_1, \ldots, {\boldsymbol \nu}_{j-1}$,
and, therefore, independent also of all of the previous $g_{j'}$, $j'<j$.

Hence, with the notation $\langle {\boldsymbol \nu}_k,{\bf e}_i\rangle=\nu_{k,i}$ we have that  $\|G_i^m\|^2$ has the same distribution as $\sum_{j=2}^m\sum_{k=1}^{j-1} \nu_{k,i}^2 g_j^2$. 

The fact that  the factors $\sum_{k=1}^{j-1} \nu_{k,i}^2$ multiplying each of the $g_j$'s are not constant and depend on the previous $g_{j'}$'s makes it impossible to apply a concentration bound directly. We circumvent this difficulty by grouping the sum in blocks with a constant factor. This increases the total sum by a very small amount.

We partition the set $\{2,\ldots, m\}$ in $N$ blocks of size $h=\frac{m-1}{N}$. Then, we have 
\begin{equation}\label{particion} \sum_{j=2}^m\sum_{k=1}^{j-1} \nu_{k,i}^2 g_j^2 = \sum_{l=1}^N \sum_{j=(l-1)h+2}^{lh+1} \sum_{k=1}^{j-1} \nu_{k,i}^2 g_j^2\leq 
 \sum_{l=1}^N \left(\sum_{j=(l-1)h+2}^{lh+1} g_j^2 \right)\left( \sum_{k=1}^{lh} \nu_{k,i}^2 \right).
 \end{equation} 
Note that $(\nu_{k,i})_{k=1}^{lh}$ can be seen as the projection of the vector $\bf e_i$ onto a random Haar distributed subspace of dimension $lh$. Now we can apply Proposition \ref{concentracion}, Proposition \ref{concentraciondenormas} and the union bound, and we get that, for every $0<\rho<1$, 
$$\mathcal G_{n^2} \left\{ \|G_i^m\|^2 \geq  \sum_{l=1}^N \left(\frac{1}{(1-\rho)} h\right)\left( \frac{1}{(1-\rho)^2}\frac{lh}{n}\right)\right\} \leq 2N e^{-\frac{\rho^2 h }{4}}.$$ 
Then, using the union bound on the $i$'s and the definitions of $\alpha$ and $N$ we have that, with probability greater than $1-2n\frac m h e^{-\frac{\rho^2 h }{4}}$, 
\begin{equation}\label{concentracionGsup}
\sup_i\|G_i^m\|^2\leq \frac{1}{(1-\rho)^3} \frac{h^2}{n}\frac{N(N+1)}{2} \leq \frac{1}{(1-\rho)^3} \frac{\alpha}{2}\left(m+h\right).
\end{equation}
Different choices of $h, \rho$ yield now different versions of our result. For instance, we can choose $h=m^{1/2}$, $\rho=m^{-1/5}$ and we have $\|G_i^m\|^2\leq \frac{\alpha}{2}m+ O(m^{4/5})$ with probability $1-2n\sqrt m e^{-\frac{m^{1/10}}{4}}$. Easy computations show that $\|G_i^m\|^2\leq \frac{\alpha}{2}m+ O(m^{2/3+\epsilon})$ with probability tending to $0$ exponentially in $m^\epsilon$.

We can also choose $h=\frac{\epsilon}{2} m$ and $\rho=\frac{\epsilon}{8}$ and, using the Taylor expansion of $\frac{1}{(1-\rho)^3}$, we get that
$$\|G_i^m\|^2\leq (1+ \epsilon) \frac{\alpha}{2}m,$$
with probability greater than $1-\frac {4 n} \epsilon e^{-\frac{\epsilon^3 m }{2^9}}$. 

For the sake of clarity, we have written   Equation (\ref{particion}) as if $N$ and $h$ were always integers. If they were not, we  can take $N'=[N]+1$ and $h'=[h]+1$. This adds at most $[N]+[h]+1$ terms to the previous sum. It is very easy to see that this extra terms do not change the above estimates. 

This upper bounds $\|G_i^m\|$. 

Similar reasonings prove the lower bound. To do this, one replaces Equation (\ref{particion}) by $$\sum_{j=2}^m\sum_{k=1}^{j-1} \nu_{k,i}^2 g_j^2\geq \sum_{l=2}^N \sum_{j=(l-1)h+2}^{lh+1} \sum_{k=1}^{(l-1)h+1} \nu_{k,i}^2 g_j^2,$$ 
and proceeds similarly as with the upper bound. \end{proof}

We proceed now to bound $\|H_i^m\|$. 
\begin{prop}\label{cotasH}
With the notation of Theorem \ref{main}, 
$$\sup_i \|H_i^m\| \leq  \sqrt{\left(2- \alpha/2-\frac{4}{3} \frac{(1-(1 -\alpha)^{3/2})}{\alpha}\right)m} +O(m^{\delta})$$
and 
$$\inf_i \|H_i^m\| \geq  \sqrt{\left(2- \alpha/2-\frac{4}{3} \frac{(1-(1 -\alpha)^{3/2})}{\alpha}\right)m} -O(m^{\delta}),$$
with probability exponentially close to 1.
\end{prop}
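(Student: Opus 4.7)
The plan is to adapt the blocking argument of Proposition \ref{cotaG} to the expansion
\[
\|H_i^m\|^2 = \sum_{j=1}^m X_j\, \nu_{j,i}^2, \qquad X_j:=\bigl(\|P_{L_{j-1}^\perp}({\bf y}_j)\|-\sqrt n\bigr)^2,
\]
handling the two factors separately. The key structural observation is that, since ${\bf y}_j$ is standard Gaussian and independent of $L_{j-1}$, the projection $P_{L_{j-1}^\perp}({\bf y}_j)$ is, conditionally on $L_{j-1}$, a standard Gaussian vector of the $(n-j+1)$-dimensional space $L_{j-1}^\perp$. Proposition \ref{concentracion2} then gives sharp concentration of $\|P_{L_{j-1}^\perp}({\bf y}_j)\|$ around $\sqrt{n-j+1}$, so $X_j$ concentrates around the deterministic value $(\sqrt n-\sqrt{n-j+1})^2$, which is monotonically increasing in $j$.

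I would then partition $\{1,\ldots,m\}$ into $N$ contiguous blocks of size $h$ and bound, on the $l$-th block,
\[
\sum_{j=(l-1)h+1}^{lh} X_j\,\nu_{j,i}^2 \;\leq\; \Bigl(\max_{(l-1)h<j\le lh} X_j\Bigr)\sum_{j=(l-1)h+1}^{lh}\nu_{j,i}^2.
\]
The concentration of the $X_j$ together with monotonicity of the mean bounds the maximum by $(\sqrt n-\sqrt{n-lh+1})^2$ plus a small additive error. For the coefficient sum one uses the crucial telescoping identity
\[
\sum_{j=(l-1)h+1}^{lh}\nu_{j,i}^2 \;=\; \|P_{L_{lh}}({\bf e}_i)\|^2-\|P_{L_{(l-1)h}}({\bf e}_i)\|^2
\]
and Proposition \ref{concentraciondenormas} applied to each projection (using that $L_k$ is Haar distributed in the Grassmannian of $k$-planes of $\mathbb R^n$) to place this difference near $h/n$ with tight error. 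A union bound over all blocks, rows, and indices yields, with probability exponentially close to $1$ and uniformly in $i$,
\[
\|H_i^m\|^2 \;\leq\; \sum_{l=1}^N (\sqrt n-\sqrt{n-lh+1})^2\cdot\tfrac{h}{n}\, +\,(\text{lower order}),
\]
whose leading term is a Riemann sum for
\[
n\int_0^{\alpha}\!\bigl(1-\sqrt{1-t}\bigr)^2\,dt \;=\; m\Bigl[2-\tfrac{\alpha}{2}-\tfrac{4}{3}\cdot\tfrac{1-(1-\alpha)^{3/2}}{\alpha}\Bigr].
\]
Taking square roots yields the upper bound. The lower bound is proved by the symmetric procedure: replace the max by the min on each block (attained at the deterministic value at $j=(l-1)h+1$) and use the lower tail estimates of Propositions \ref{concentracion2} and \ref{concentraciondenormas}; the same Riemann sum, now read from below, gives the matching leading term.

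The main technical obstacle is the calibration of the pair $(h,\rho)$, where $\rho$ is the common concentration radius used in the two propositions. The Riemann discretization error is of order $m^{3/2}/h$, and the concentration errors inflate the leading term by $O(\rho m)$; both must be $O(m^{1/2+\delta})$ so that, after extracting the square root, they contribute only $O(m^\delta)$ to the estimate of $\|H_i^m\|$. Simultaneously the union-bound penalty of order $nN\, e^{-\rho^2 k/4}$, with $k$ ranging from $n$ down to roughly $n(1-\alpha)$, must remain exponentially small in a positive power of $n$. Choosing for instance $h\asymp m^{1-\delta}$ and $\rho\asymp m^{\delta-1/2}$ balances all these contributions and, following the same calibration used in Proposition \ref{cotaG}, delivers $\delta=\tfrac{2}{5}$.
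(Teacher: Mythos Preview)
Your approach is essentially the paper's: block the sum, control $X_j=(\|P_{L_{j-1}^\perp}({\bf y}_j)\|-\sqrt n)^2$ via Proposition~\ref{concentracion2}, control the block sums $\sum_{j}\nu_{j,i}^2$ via Proposition~\ref{concentraciondenormas}, and compare to the Riemann sum for $n\int_0^\alpha(1-\sqrt{1-t})^2\,dt$. The paper in fact bounds the block sum $\sum_{j=(l-1)h+1}^{lh}\nu_{j,i}^2$ directly by viewing $(\nu_{j,i})$ as the projection of ${\bf e}_i$ onto the Haar-distributed $h$-dimensional span of $\boldsymbol\nu_{(l-1)h+1},\ldots,\boldsymbol\nu_{lh}$, rather than through your telescoping difference; this avoids the accumulated error $O(\rho\, lh/n)$ that grows with $l$.

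Two technical points you glossed over that the paper handles explicitly. First, your union-bound penalty $e^{-\rho^2 k/4}$ has $k$ going down to $n(1-\alpha)$; for $\alpha=1$ this is vacuous, and the paper treats that case separately by splitting off the last $\sqrt n$ indices (where one only uses the crude bound $X_j\le n$) and running the main argument on $\{1,\ldots,n-\sqrt n\}$. Second, the lower bound is not quite ``symmetric'': from $\|P_{L_{j-1}^\perp}({\bf y}_j)\|\le (1-\rho_0)^{-1/2}\sqrt{n-j+1}$ one cannot conclude $X_j\ge(\sqrt n-(1-\rho_0)^{-1/2}\sqrt{n-j+1})^2$ unless the quantity inside is nonnegative, which fails for small $j$. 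The paper drops the first $j_0$ terms (with $(1-\rho_0)^{-1}(n-j_0+1)\le n$) before applying the blocking; since $j_0$ is of lower order this does not affect the leading constant.
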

\begin{proof}
Recall that $$H_i^m=\left((\|{\bf y}_j-\Delta_j\|-\sqrt{n})\nu_{j,i}\right)_{j=1}^m,$$
where ${\bf y}_j-\Delta_j$ is the projection of ${\bf y}_j$ onto the $n-(j-1)$ dimensional subspace orthogonal to the subspace $L_{j-1}=span\{{\bf y}_1, \ldots, {\bf y}_{j-1}\}$. We will first bound  the term $(\|{\bf y}_j-\Delta_j\|-\sqrt{n})^2=\left(\|P_{L^\perp_{j-1}}{\bf y}_{j}\|-\sqrt n\right)^2$. For that, we need an auxiliar Lemma which we will also use later.
\begin{lemma}\label{yaveremos}
With the notation above, we have 
\begin{itemize}
\item[(i)] For every $0<\rho<1$ and for every $m<n$
\begin{equation*} \left(\sqrt n-\|P_{L^\perp_{j-1}}{\bf y}_{j}\|\right)^2 \leq \left(\sqrt{n} -(1-\rho)^{\frac{1}{2}}\sqrt{n-j+1}\right)^2 \text{     }\text{  for    } \text{    }1\leq j\leq m, 
\end{equation*}
except for a set $Z_1$ with $\mathcal G_{n^2}(Z_1)\leq  m \left( e^{-\frac{\rho^2 (n-m+1)}{4}}+ e^{-\frac{\rho^2 (n-m+1)}{16}} \right)$.
\item[(ii)]
Let $0<\rho_0<1$ and $j_0\in \mathbb N$ such that $(1-\rho_0)^{-1}(n-j_0+1)\leq n$. Then, for every $m<n$ 
\begin{equation*} \left(\sqrt n-\|P_{L^\perp_{j-1}}{\bf y}_{j}\|\right)^2\geq \left(\sqrt{n} -(1-\rho_0)^{-\frac{1}{2}}\sqrt{n-j+1}\right)^2 \text{     }\text{  for    } \text{    } j_0< j\leq m,
\end{equation*}
except for a set $Z_2$ with $\mathcal G_{n^2}(Z_2)\leq  (m-j_0) e^{-\frac{\rho_0^2 (n-m+1)}{4}}$.\end{itemize}
\end{lemma}
\begin{proof}
First we prove (i). We choose $\epsilon=\rho/2$ in Equation (\ref{conslow}) and $\epsilon=\rho$ in Equation (\ref{consup}) and  we get  
\begin{equation*} (1-\rho)^{\frac{1}{2}}\sqrt{n-j+1}-\sqrt{n}\leq \|P_{L^\perp_{j-1}}{\bf y}_{j}\|-\sqrt n \leq \left(1-\frac \rho 2\right)^{-\frac{1}{2}}\sqrt{n-j+1}-\sqrt{n} 
\end{equation*}
except for a set of measure $e^{\frac{-\rho^2 (n-j+1)}{4}}+e^{\frac{-\rho^2 (n-j+1)}{16}}$. Using the fact that for $0<\rho<1$ $1-(1-\rho)^{\frac{1}{2}}\geq (1-\frac{\rho}{2})^{\frac{-1}{2}}-1$ we have 
$$\left|(1-\frac{\rho}{2})^{\frac{-1}{2}}\sqrt{n-j+1}-\sqrt{n}\right| \leq \left|(1-\rho)^{\frac{1}{2}}\sqrt{n-j+1}-\sqrt{n}\right|.$$ Then, taking squares and applying a union bound we get (i). 

The proof of (ii) follows from Equation (\ref{conslow}), the extra condition on $\rho_0$ and $j_0$ and the union bound.
\end{proof}

Now, in order to upper bound $\|H_i^m\|^2$ we first consider the case $\alpha<1$. As in the proof of Theorem \ref{cotaG}, we partition the set $\{1,\ldots, m\}$ in $N$ blocks of size $h=\frac{m}{N}$. Then, using Lemma \ref{yaveremos}.(i),  we write
\begin{align} 
\label{particionH} \|H_i^m\|^2&=\sum_{j=1}^m \left(\|{\bf y}_j-\Delta_j\|-\sqrt{n}\right)^2 \nu_{j,i}^2  \\
&\leq \sum_{l=1}^N \sum_{j=(l-1)h+1}^{lh} \left(\sqrt{n} -(1-\rho)^{\frac{1}{2}}\sqrt{n-lh+1}\right)^2 \nu_{j,i}^2 \nonumber \\
&=\sum_{l=1}^N \left(\sqrt{n} -(1-\rho)^{\frac{1}{2}}\sqrt{n-lh}\right)^2 \sum_{j=(l-1)h+1}^{lh}  \nu_{j,i}^2, \nonumber
\end{align} 
outside of $Z_1$. 

On the other hand, considering $(\nu_{k,i})_{i=(l-1)h+1}^{lh}$ as the projection of ${\bf e}_i$ onto a random subspace of dimension $lh$, Proposition \ref{concentraciondenormas}  tells us that, for every $1\leq i \leq n$ and $1\leq l\leq N$, $$\sum_{j=(l-1)h+1}^{lh}\nu_{j,i}^2 \leq   \frac{1}{(1-\rho')^2}\frac{h}{n}$$
except for a set $Z_1'$ with $\mathcal G_{n^2}(Z_1')\leq nN e^{-\frac{\rho'^2 h}{4}}.$

So, we have that, outside of $Z_1\cup Z_1' $, 
\begin{align*}
\|H_i^m\|^2 \leq \frac{1}{(1-\rho')^2}\frac{h}{n} &\sum_{l=1}^N \left(\sqrt{n} -(1-\rho)^{\frac{1}{2}}\sqrt{n-lh}\right)^2\\
=\frac{1}{(1-\rho')^2} \frac{h}{n} &\left[nN+ (1-\rho)\left(nN-h\frac{N(N+1)}{2}\right) \right. \\
 &-\left. 2(1-\rho)^{\frac{1}{2}}\sqrt{n} \sum_{l=1}^N \sqrt{n -lh}\right].
\end{align*} 
We can bound
$$\sum_{l=1}^N\sqrt{n-lh}\geq \int_1^{N}\sqrt{n-xh} dx=\frac {2} {3h}\left((n- h)^{3/2}-  (n-N h)^{3/2} \right).$$
Then, putting this together with the definitions of $\alpha$ and $N$, we get that
\begin{equation*}
\begin{aligned}
\|H_i^m\|^2 \leq \frac{1}{(1-\rho')^2} m & \left[ 1+ (1-\rho)\left(1-\frac \alpha 2 -\frac{\alpha h}{2m}\right) \right.  \\
& \left. -  (1-\rho)^{\frac{1}{2}} \frac {4} {3\alpha }\left(\left(1- \frac {\alpha h} m\right)^{3/2}-  (1-\alpha)^{3/2}\right)\right],
\end{aligned}
\end{equation*}
with probability greater than
\begin{equation}\label{concentracionmenor1} 1-\frac{m^2}{\alpha h} e^{-\frac{\rho^2 h}{4}}-m \left( e^{-\frac{\rho^2 (n-m)}{4}}+ e^{-\frac{\rho^2 (n-m)}{16}} \right).
\end{equation}

Again, different choices of $h, \rho, \rho'$ yield now different versions of our result. For instance, taking $h=m^{1/2}$, $\rho=\rho'=m^{-1/5}$ we get 
$$\|H_i^m\|^2 \leq \left(2- \alpha/2-\frac{4}{3} \frac{(1-(1 -\alpha)^{3/2})}{\alpha}\right)m +O(m^{4/5}),$$
with probability tending to one exponentially in $m^{1/10}$.
Easy computations also show that $\|G_i^m\|^2\leq \frac{\alpha}{2}m+ O(m^{2/3+\epsilon})$ with probability tending to 0 exponentially in $m^\epsilon$.

The reasonings above do not apply directly to the case $\alpha=1$,  as the bound of the probability in Equation (\ref{concentracionmenor1}) becomes trivial in that case. To overcome this issue, in case $\alpha=1$ we consider $h=\frac {n-\sqrt n} N$ and rewrite Equation (\ref{particionH}) as 
\begin{align*} \|H_i^m\|^2& \leq \sum_{j=1}^n \big(\|{\bf y}_j-\Delta_j\|-\sqrt{n} \big)^2 \nu_{j,i}^2 \\ 
& =  \sum_{j=1}^{n-\sqrt{n}}  \big(\|{\bf y}_j-\Delta_j\|-\sqrt{n} \big)^2 \nu_{j,i}^2 +  \sum_{j=n-\sqrt{n}+1}^n  \big(\|{\bf y}_j-\Delta_j\|-\sqrt{n} \big)^2 \nu_{j,i}^2\\ &\leq \sum_{l=1}^N \sum_{j=(l-1)h+1}^{lh} \left(\sqrt{n} -(1-\rho)^{\frac{-1}{2}}\sqrt{n-lh+1}\right)^2 \nu_{j,i}^2 + \sum_{j=n-\sqrt{n}+1}^n n \nu_{j,i}^2,
\end{align*}
outside of the set $Z_1$ defined in Lemma \ref{yaveremos} in the case $m=n-\sqrt{n}$. 

The first summand is treated as previously where now $m=n-\sqrt{n}$. We note that, using Proposition \ref{concentraciondenormas} and the union bound once again, the second summand verifies, with probability greater than $1-ne^{-\frac{\rho^2 \sqrt n}{4}}$,
$$n \sum_{j=n-\sqrt{n}+1}^n  \nu_{j,i}^2\leq n \frac{1}{(1-\rho^2)\sqrt{n}}=\frac{\sqrt{n}}{(1-\rho^2)}.$$
This only adds an $O(\sqrt{n})$ term which does not modify the result. 
This finishes the proof of the upper bound. 
\smallskip

For the lower bound we reason similarly. 
Consider $j_0,\rho_0$ as in Lemma \ref{yaveremos}.(ii). Then, with probability $1-n(m-j_0) e^{-\frac{\rho_0^2 (n-m+1)}{4}}$, for every $1\leq i\leq n$ we have that 
\begin{equation}\label{sumaj0}
\begin{aligned}
\|H_i^m\|^2  \geq &\sum_{j=j_0+1}^m \left(\|{\bf y}_j-\Delta_j\|-\sqrt{n}\right)^2 \nu_{j,i}^2\\ 
\geq &\sum_{j=j_0+1}^m  \left(\sqrt{n} -(1-\rho_0)^{-1/2}\sqrt{n-j+1}\right)^2 \nu_{j,i}^2.
\end{aligned}
\end{equation}

Partitioning the set $\{j_0+1,\ldots, m\}$ in $N$ blocks of size $h=\frac{m-j_0}{N}$ and using similar reasonings  to those used for the upper bound in the case $\alpha<1$ we obtain

\begin{align*}
\inf_i \|H_i^m\|^2  \geq (1-\rho')^2& (m-j_0) \left[1+ (1-\rho_0)  \left(1-\frac \alpha 2+\frac{\alpha(j_0+h)}{2m}\right) \right.\\ 
&\left.-\frac {4(1-\rho_0)^{\frac{1}{2}}} {3(\alpha -\frac{\alpha j_0}{m})}\left(\left(1+\frac {\alpha h} m\right)^{3/2}-  \left(1-\alpha+\frac {\alpha j_0} m\right)^{3/2}\right)\right],
\end{align*}
with probability higher than $1-\frac m \alpha (m-j_0) e^{-\frac{\rho_0^2 (n-m+1)}{4}}-\frac h \alpha e^{-\frac{\rho'^2 h}{4}}$. As in expressions (\ref{concentracionGsup}) and (\ref{concentracionmenor1}) different values of $j_0$, $\rho_0$, $\rho$ and $h$ give different bounds.

The case $\alpha=1$ can be treated as in the upper bound. The terms in (\ref{sumaj0}), where $n-\sqrt n+1\leq j \leq n$ can only add up to  $O(\sqrt n)$ and the rest of the terms  can be bounded as before. 

Note that, as in the proof of Proposition \ref{cotaG}, we are assuming that $N$, $h$ and $\sqrt n$ are integers. If this is not the case we can consider $N'=[N]+1$ and $h'=[h]+1$ for the upper estimates ($N'=[N]$ and $h'=[h]$ for the lower estimates) and $[\sqrt n]$ for the case $\alpha=1$. Adding or subtracting these extra terms will give negligible quantities compared with the sums.
This finishes the proof.
\end{proof}

We now need to prove that $\langle G_i^m,H_i^m\rangle$ is negligible when compared with $\|G_i^m\|^2$ and $\|H_i^m\|^2$. More precisely, we will use similar techniques to show that $\langle G_i^m,H_i^m\rangle$ is, with probability exponentially close to 1, of smaller order in $m$ than $\|G_i^m\|^2$ and $\|H_i^m\|^2$. As shown in Proposition \ref{cotaG} and Proposition \ref{cotasH} above, each of them is $\Theta(m)$. That is, there exist constants $k_1,k_2$ and $m_0$ such that for all $m>m_0$, we have that $k_1 m\leq\|G_i^m\|^2 \leq k_2 m$ and analogously for $\|H_i^m\|^2$.
\begin{prop}\label{sharp}With the notation of Theorem \ref{main}, given $\epsilon >0$ we have 
$$\langle G_i^m,H_i^m\rangle=O(m^{\frac{1}{2}+\epsilon}),$$
with probability exponentially close to 1. 
\end{prop}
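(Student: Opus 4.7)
My plan is to identify a sigma-algebra $\mathcal F$ relative to which $\langle G_i^m, H_i^m\rangle$ is a centered Gaussian with variance already controlled by Proposition \ref{cotasH}; the claim will then follow from a standard Gaussian tail estimate together with a union bound over $i$.

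Concretely, I would take $\mathcal F$ to be the sigma-algebra generated by the perpendicular components $\mathbf y_j-\Delta_j = P_{L_{j-1}^\perp}(\mathbf y_j)$ for $1\leq j\leq m$. A sequential application of the orthogonal splitting $\mathbf y_j=\Delta_j+(\mathbf y_j-\Delta_j)$, together with the rotational invariance of $\mathcal G_n$, shows that (a) $\mathcal F$ determines every subspace $L_{j-1}$ and every unit vector $\boldsymbol\nu_j$, hence every $H_{i,j}^m=(\|\mathbf y_j-\Delta_j\|-\sqrt n)\nu_{j,i}$ is $\mathcal F$-measurable; and (b) the parallel parts $\Delta_j$ remain, conditionally on $\mathcal F$, mutually independent Gaussian vectors, each on the corresponding $L_{j-1}$. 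Since $G_{i,j}^m=\langle P_{L_{j-1}}(\mathbf e_i),\Delta_j\rangle$, this yields
$$\langle G_i^m,H_i^m\rangle\,\big|\,\mathcal F \;\sim\; N\!\Bigl(0,\,\sigma_i^2\Bigr),\qquad \sigma_i^2 := \sum_{j=1}^m \|P_{L_{j-1}}(\mathbf e_i)\|^2 (H_{i,j}^m)^2.$$

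To finish, I would use the trivial bound $\|P_{L_{j-1}}(\mathbf e_i)\|\leq 1$ and Proposition \ref{cotasH} to obtain $\sigma_i^2\leq \|H_i^m\|^2\leq Cm$ for all $i$ outside an event of exponentially small probability. The standard Gaussian tail inequality then yields
$$P\bigl(|\langle G_i^m,H_i^m\rangle|>m^{1/2+\epsilon}\,\big|\,\mathcal F\bigr)\leq 2\exp\!\bigl(-m^{2\epsilon}/(2C)\bigr),$$
and a union bound over $i=1,\dots,n$ closes the argument, with total failure probability exponentially small in $m$. A sharper $\sigma_i^2\leq C'\alpha m$ is available by using Proposition \ref{concentraciondenormas} to bound $\sup_i\|P_{L_{m-1}}(\mathbf e_i)\|^2\leq Cm/n$, but this refinement is not needed.

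The only genuine conceptual hurdle is pinning down $\mathcal F$. Once one observes that $G_{i,j}^m$ and $H_{i,j}^m$ depend respectively on the parallel and perpendicular components of $\mathbf y_j$ relative to $L_{j-1}$, the apparent entanglement between $G_i^m$ and $H_i^m$ --- built from the same columns of $Y_n$ --- dissolves into conditional independence, and no martingale inequality or Freedman-type refinement is required.
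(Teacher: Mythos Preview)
Your approach is correct and genuinely different from the paper's. The paper does not condition on the perpendicular components; instead it rewrites
\[
\langle G_i^m,H_i^m\rangle=\sum_{j=1}^m |G_{i,j}^m|\,H_{i,j}^m\,\sign(G_{i,j}^m),
\]
and then argues, via an explicit change of variables on each $\mathbf y_j$, that the signs $\sign(G_{i,j}^m)$ can be replaced by independent Bernoulli variables $\epsilon_j$ without changing the law. Hoeffding's inequality is applied conditionally on everything except the $\epsilon_j$'s, and the denominator $\sum_j (G_{i,j}^m)^2(H_{i,j}^m)^2$ is bounded termwise using an auxiliary Lemma~\ref{aux1}, Proposition~\ref{concentraciondenormas}, and Lemma~\ref{yaveremos}. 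Your route is cleaner: by conditioning on $\mathcal F=\sigma\bigl(P_{L_{j-1}^\perp}(\mathbf y_j):j\leq m\bigr)$ you make the $G_{i,j}^m$ conditionally independent centered Gaussians while rendering every $H_{i,j}^m$ measurable, so the inner product is itself conditionally Gaussian. The variance then drops out as $\sigma_i^2\leq\|H_i^m\|^2$, which is already controlled by Proposition~\ref{cotasH}; no term-by-term bound and no auxiliary Lemma~\ref{aux1} are needed. The paper's symmetrization buys nothing extra here---it was a device to manufacture independence where you obtain it directly---and your Gaussian tail is at least as sharp as Hoeffding. The inductive verification that $\mathcal F$ determines each $L_{j-1}$ and $\boldsymbol\nu_j$ (via $\boldsymbol\nu_j=P_{L_{j-1}^\perp}(\mathbf y_j)/\|P_{L_{j-1}^\perp}(\mathbf y_j)\|$) and leaves the $\Delta_j$ conditionally independent Gaussians is straightforward, as you indicate.
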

This proposition, together with Propositions \ref{cotaG} and \ref{cotasH}, finishes the proof of Theorem \ref{main}.

For the sake of clarity we will first show the following technical lemma that will be used in the proof of Proposition \ref{sharp}. 
\begin{lemma}\label{aux1}Let ${\bf y}_j$ be a Gaussian vector and $L_{j-1}$ a Haar distributed subspace of dimension $j-1$, then   
$$Pr\left(\langle P_{L_{j-1}}({\bf y}_j), {\bf e}_i\rangle^2> \frac{j-1}{n} m^\epsilon\right)\leq   2e^{-\frac{m^{\epsilon/2} -2}{4}}.$$
\end{lemma}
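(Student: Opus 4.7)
The plan is to exploit the self-adjointness of the orthogonal projection to rewrite
$$\langle P_{L_{j-1}}({\bf y}_j),{\bf e}_i\rangle = \langle {\bf y}_j, P_{L_{j-1}}({\bf e}_i)\rangle,$$
and then use the independence of ${\bf y}_j$ and $L_{j-1}$ to decouple the two sources of randomness. Conditionally on $L_{j-1}$, the right-hand side is a centered Gaussian with variance $\|P_{L_{j-1}}({\bf e}_i)\|^2$, so we obtain the distributional identity
$$\langle P_{L_{j-1}}({\bf y}_j),{\bf e}_i\rangle^2 \stackrel{d}{=} \|P_{L_{j-1}}({\bf e}_i)\|^2\, g^2,$$
where $g\sim N(0,1)$ is independent of $L_{j-1}$. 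This is exactly the same trick that led to \eqref{distributionequality} in the proof of Proposition \ref{cotaG}, so it is standard in the framework of this paper.

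Once we have this product decomposition, the strategy is to split the truncation level symmetrically between the two factors and apply a union bound. Concretely, I would set the threshold $m^{\epsilon/2}$ for each factor: the event
$$\left\{\|P_{L_{j-1}}({\bf e}_i)\|^2 g^2 > \tfrac{j-1}{n}\, m^\epsilon\right\}$$
is contained in the union
$$\left\{\|P_{L_{j-1}}({\bf e}_i)\|^2 > \tfrac{j-1}{n}\, m^{\epsilon/2}\right\} \cup \left\{g^2 > m^{\epsilon/2}\right\}.$$
The first event is controlled by the third estimate in Proposition \ref{concentraciondenormas} applied with $t = m^{\epsilon/4}$ and $k = j-1$, which yields probability at most $e^{-\frac{(j-1)(m^{\epsilon/2}-2)}{4}} \leq e^{-\frac{m^{\epsilon/2}-2}{4}}$ (for $j\geq 2$; the case $j=1$ is trivial since $P_{L_0}=0$). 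The second event is handled by Lemma \ref{cotaGaussiana} with $t = m^{\epsilon/4}$, giving $e^{-m^{\epsilon/2}/2}$, which is also dominated by $e^{-\frac{m^{\epsilon/2}-2}{4}}$. Summing the two contributions gives the stated $2\,e^{-\frac{m^{\epsilon/2}-2}{4}}$.

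There is no serious obstacle here; the only points requiring care are (i) making the independence argument clean enough that one can literally extract a standard normal factor from the inner product, and (ii) checking that the numerology of the two tail estimates is matched up correctly by the symmetric choice $m^{\epsilon/2}$ so that both bounds fit under a single exponential with rate $m^{\epsilon/2}$.
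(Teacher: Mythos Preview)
Your proposal is correct and follows essentially the same approach as the paper: the paper also uses the self-adjointness of the projection, invokes the distributional identity from \eqref{distributionequality}, and then applies a union bound with Lemma~\ref{cotaGaussiana} and Proposition~\ref{concentraciondenormas} at the symmetric threshold $m^{\epsilon/2}$, arriving at the identical two exponential terms $e^{-\frac{j-1}{4}(m^{\epsilon/2}-2)}+e^{-\frac{m^{\epsilon/2}}{2}}$ before bounding both by $e^{-\frac{m^{\epsilon/2}-2}{4}}$.
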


\begin{proof}
First of all note that $\langle P_{L_{j-1}}({\bf y}_j), {\bf e}_i\rangle=\langle P_{L_{j-1}}({\bf e}_i),{\bf y}_j\rangle$. 
We have already shown in Equation (\ref{distributionequality}) that $\langle P_{L_{j-1}}({\bf e}_i),{\bf y}_j\rangle^2$ has the same distribution as the term  $\|P_{L_{j-1}}({\bf e}_i)\|^2 g_j^2$, where $g_j$ is a standard normal  variable. 
Putting together Lemma \ref{cotaGaussiana}, Proposition \ref{concentraciondenormas} and the union bound, we get 
$$Pr\left(\langle P_{L_{j-1}}({\bf e}_i), {\bf y}_j\rangle^2> \frac{j-1}{n} m^\epsilon \right) \leq e^{-\frac{j-1}{4}(m^{\epsilon/2}-2)}+e^{-\frac{m^{\epsilon/2}}{2}} \leq 2e^{-\frac  {m^{\epsilon/2}-2} {4}}.$$\end{proof}
We will also need Hoeffding's inequality \cite{Hoeffding}. 
\begin{prop}[Hoeffding's inequality]\label{Hoeffding}
Let $(X_i)_{i=1}^n$ be a family of independent random variables such that $a_i\leq X_i\leq b_i$ for $i=1,...,n$. Let $S=\sum_{i=1}^n X_i$. Then, for every $a>0$,  $$Pr(|S-\mathbb{E}(S)|>a)\leq 2e^{-\frac{2a^2}{\sum_i (b_i-a_i)^2}}.$$ 
\end{prop}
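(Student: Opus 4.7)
The plan is to prove Hoeffding's inequality via the standard Chernoff–bound / moment generating function argument, in three stages: (i) reduce the tail bound to a bound on the moment generating function of a centered bounded variable; (ii) prove such a bound (Hoeffding's lemma); (iii) optimize the free parameter.

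First I would handle the one–sided estimate. Write $Y_i=X_i-\mathbb E(X_i)$, so that each $Y_i$ is centered and lies in an interval of length $b_i-a_i$. For any $\lambda>0$ Markov's inequality applied to the increasing map $x\mapsto e^{\lambda x}$ gives
$$\Pr\!\left(S-\mathbb E(S)>a\right)=\Pr\!\left(e^{\lambda \sum_i Y_i}>e^{\lambda a}\right)\leq e^{-\lambda a}\,\mathbb E\!\left[e^{\lambda \sum_i Y_i}\right].$$
Using independence of the $X_i$ (hence of the $Y_i$) the moment generating function factorizes as $\mathbb E[e^{\lambda\sum Y_i}]=\prod_i \mathbb E[e^{\lambda Y_i}]$. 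The problem is therefore reduced to controlling $\mathbb E[e^{\lambda Y_i}]$ for each centered bounded $Y_i$.

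The main technical step is Hoeffding's lemma: if $Y$ is centered and $a'\leq Y\leq b'$, then $\mathbb E[e^{\lambda Y}]\leq e^{\lambda^2(b'-a')^2/8}$. I would prove it by using convexity of $x\mapsto e^{\lambda x}$ to write, for every $y\in[a',b']$,
$$e^{\lambda y}\leq \frac{b'-y}{b'-a'}e^{\lambda a'}+\frac{y-a'}{b'-a'}e^{\lambda b'}.$$
Taking expectations (recall $\mathbb E(Y)=0$) reduces the bound to showing $\varphi(u):=-\gamma u+\log(1-\gamma+\gamma e^{u})\leq u^2/8$, where $u=\lambda(b'-a')$ and $\gamma=-a'/(b'-a')\in[0,1]$. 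This is the only real calculation: a direct check shows $\varphi(0)=\varphi'(0)=0$ and $\varphi''(u)\leq 1/4$ (one sees this by writing $\varphi''(u)=t(1-t)$ with $t=\gamma e^u/(1-\gamma+\gamma e^u)\in[0,1]$), so Taylor's theorem gives $\varphi(u)\leq u^2/8$. This step — squeezing $\varphi''$ uniformly by $1/4$ — is where the constant $8$ (and hence the final constant $2$ in the exponent) comes from, and is the part that deserves the most care.

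Combining the two previous steps yields
$$\Pr\!\left(S-\mathbb E(S)>a\right)\leq e^{-\lambda a}\prod_i e^{\lambda^2(b_i-a_i)^2/8}=\exp\!\left(-\lambda a+\tfrac{\lambda^2}{8}\sum_i (b_i-a_i)^2\right).$$
Optimizing in $\lambda>0$ by choosing $\lambda=4a/\sum_i(b_i-a_i)^2$ produces $\exp(-2a^2/\sum_i(b_i-a_i)^2)$. Applying the same argument to $-S$ (which satisfies the same hypotheses with interval lengths unchanged) bounds $\Pr(S-\mathbb E(S)<-a)$ by the same quantity, and a union bound over the two tails delivers the factor $2$ and the statement of the proposition.
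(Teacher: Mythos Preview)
Your argument is the standard and correct proof of Hoeffding's inequality via the Chernoff bound and Hoeffding's lemma. Note, however, that the paper does not supply its own proof of this proposition: it is stated as a known result with a reference to Hoeffding's original paper, so there is nothing in the paper to compare your approach against.
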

\begin{proof}[Proof of Proposition \ref{sharp}]
Recall that we have $$G_i^m= \left(\sum_{k=1}^{j-1} \langle {\bf y}_j, {\boldsymbol \nu}_k\rangle \langle {\boldsymbol \nu}_k , {\bf e}_i\rangle\right)_{j=1}^m=\left(\langle P_{L_{j-1}}({\bf y}_j), {\bf e}_i\rangle\right)_{j=1}^m$$
and 
$$H_i^m=\left((\|{\bf y}_j-\Delta_j\|-\sqrt{n})\langle{\boldsymbol \nu}_j,{\bf e}_i\rangle\right)_{j=1}^m=
\left((\| P_{L^{\perp}_{j-1}}({\bf y}_j) \|-\sqrt{n})\langle{\boldsymbol \nu}_j,{\bf e}_i\rangle\right)_{j=1}^m.$$
Therefore,
\begin{align}\label{Initial orthogonal}
\langle G_i^m,H_i^m\rangle&=\sum_{j=1}^m \langle P_{L_{j-1}}({\bf y}_j), {\bf e}_i\rangle
\big(\| P_{L^{\perp}_{j-1}}({\bf y}_j\big ) \|-\sqrt{n}\big)\langle{\boldsymbol \nu}_j,{\bf e}_i\rangle\\&\nonumber=\sum_{j=1}^m \left|\langle P_{L_{j-1}}({\bf y}_j), {\bf e}_i\rangle\right|
\big(\| P_{L^{\perp}_{j-1}}({\bf y}_j\big ) \|-\sqrt{n}\big)\langle{\boldsymbol \nu}_j,{\bf e}_i\rangle \sign\left(\langle P_{L_{j-1}}({\bf y}_j), {\bf e}_i\rangle\right).
\end{align}
We claim that the probability distribution of the previous expression is the same as the probability distribution of
\begin{align}\label{split variables}
\sum_{j=1}^m \big|\langle P_{\tilde{L}_{j-1}}(\tilde{{\bf y}}_j), {\bf e}_i\rangle\big|
\big(\| P_{\tilde{L}^{\perp}_{j-1}}(\tilde{{\bf y}}_j\big ) \|-\sqrt{n}\big)\langle\tilde{{\boldsymbol \nu}}_j,{\bf e}_i\rangle\epsilon_j,
\end{align}where $\tilde{{\bf y}}_1,\cdots, \tilde{{\bf y}}_n$ are independent Gaussian vectors, $\tilde{{\boldsymbol \nu}}_1, \cdots , \tilde{{\boldsymbol \nu}}_n$ are the correspon\-ding vectors obtained from the Gram-Schmidt orthonormalization procedure and $\epsilon_1,\cdots ,\epsilon_n$ are independent and identically distributed Bernoulli variables taking values in an independent probability space.

In order to see this, let us consider the space $\mathbb R^n\times \stackrel{(n)}{\cdots}\times \mathbb R^n$ with the Gaussian measure $\mathcal G_{n}$ on each $\mathbb R^n$. For each $j$ we denote by ${\bf z}_j=(z_{k,j})_{k=1}^n$ the Gaussian vector in the corresponding copy of $\mathbb R^n$. For each $j\geq2$ we consider in  $\mathbb R^n$ the equivalence relation ${\bf z}\sim_j {\bf z'}$ if and only if $(z_1,\ldots, z_{j-1})=\pm (z'_1,\ldots, z'_{j-1})$ and $(z_j,\ldots, z_{n})= (z'_j,\ldots, z'_{n})$. Then, $\mathbb R^n=\left(\mathbb R^n/\sim_j\right) \times \{-1,1\}$, with the identification ${\bf z}_j=([{\bf z}_j],\sigma_j)$, with $\sigma_j\in \{-1, 1\}$. We define the probability measure $\mathcal G'_{n}$ on  $\mathbb R^n/\sim_j$ by the density $f'([{\bf z}_j])=2f({\bf z}_j)$, where $f$ is the density of $\mathcal G_{n}$, and we call $\mu$ the uniform probability on $\{-1,1\}$. We clearly have $\mathcal G_{n}=\mathcal G'_{n}\otimes \mu$.

Let us now consider a family of independent Gaussian vectors $({\bf z}_1,\cdots ,{\bf z}_n)$ in the previous probability space. 
For $j=1$ there is no equivalence relation, and we define $\tilde{{\bf y}}_1={\bf z}_1$, which is clearly a Gaussian vector. Consequently, we define $\tilde{{\boldsymbol \nu}}_1=\frac{1}{\|\tilde{{\bf y}}_1\|}\tilde{{\bf y}}_1$. 

Now, for each $2\leq j \leq m $, we consider $\tilde{L}_{j-1}$, the random $(j-1)$-dimensional subspace spanned by $\tilde{{\bf y}}_1, \ldots, \tilde{{\bf y}}_{j-1}$. The vectors $\tilde{{\boldsymbol \nu}}_1 ,\ldots, \tilde{{\boldsymbol \nu}}_{j-1}$ form an orthonormal basis of $\tilde{L}_{j-1}$. Hence, we can complete this set to obtain a basis of $\mathbb R^n$, $ \{\tilde{{\boldsymbol \nu}}_1,\ldots, \tilde{{\boldsymbol \nu}}_{j-1}, {\boldsymbol \nu}^*_j, \ldots, {\boldsymbol \nu}^*_n\}$. The added vectors needed to complete the orthonormal basis can be chosen at will. In general they will change as $j$ changes.  

Let us denote by $U_j$ the orthogonal matrix whose columns are the vectors of the previous basis. Then, given ${\bf z}_j$, we define $\tilde{{\bf y}}_j=U_j{\bf z}_j$. Since the orthogonal matrix $U_j$ is independent of the Gaussian vector ${\bf z}_j$, we immediately deduce that $\tilde{{\bf y}}_j$ is a Gaussian vector independent of $\tilde{{\bf y}}_1,\cdots ,\tilde{{\bf y}}_{j-1}$. 

From the Gram-Schmidt orthonormalization procedure we have that 
$$\tilde{{\boldsymbol \nu}}_j=\frac{\tilde{{\bf y}}_j-{\sum_{k=1}^{j-1}\langle \tilde{{\bf y}}_j, \tilde{{\boldsymbol \nu}}_k\rangle \tilde{{\boldsymbol \nu}}_k}}{\|\tilde{{\bf y}}_j-{\sum_{k=1}^{j-1}\langle \tilde{{\bf y}}_j, \tilde{{\boldsymbol \nu}}_k\rangle \tilde{{\boldsymbol \nu}}_k}\|}.$$ 

It follows immediately that  $$P_{\tilde{L}_{j-1}}(\tilde{{\bf y}}_j)=\sum_{k=1}^{j-1} z_{k,j} \tilde{{\boldsymbol \nu}}_k\text{      }\text{      and    }\text{      } P_{\tilde{L}^{\perp}_{j-1}}(\tilde{{\bf y}}_j)=\sum_{k=j}^{n} z_{k,j} {\boldsymbol \nu}^*_k,$$
where we recall that ${\bf z}_j=(z_{k,j})_{k=1}^n$. 

With the identification ${\bf z}_j=([{\bf z}_j], \sigma_j)$, it is easy to  see that $\big|\langle P_{\tilde{L}_{j-1}}(\tilde{{\bf y}}_j), {\bf e}_i\rangle\big|$ and $\big(\| P_{\tilde{L}^{\perp}_{j-1}}(\tilde{{\bf y}}_j\big ) \|-\sqrt{n}\big)\langle\tilde{{\boldsymbol \nu}}_j,{\bf e}_i\rangle$ do not depend on  $\sigma_2, \ldots ,\sigma_n$ (or, equivalently, they only depend on the variables $[{\bf z_j}]$). Indeed, to see this we note first that it follows from the definitions that  the vectors $\tilde{{\boldsymbol \nu}}_j$ are independent of $\sigma_2, \cdots ,\sigma_n$. Next, we note that the dependence of  $\langle P_{\tilde{L}_{j-1}}(\tilde{{\bf y}}_j), {\bf e}_i\rangle$ with respect to $\sigma_2, \ldots ,\sigma_n$ is cancelled out by the absolute value. 

Hence,  expression (\ref{Initial orthogonal}) applied to the independent Gaussian vectors $\tilde{{\bf y}}_1,\cdots, \tilde{{\bf y}}_n$ has the form
\begin{align*}
\sum_{j=1}^m \big|\langle P_{\tilde{L}_{j-1}}(\tilde{{\bf y}}_j), {\bf e}_i\rangle\big|
\big(\| P_{\tilde{L}^{\perp}_{j-1}}(\tilde{{\bf y}}_j\big ) \|-\sqrt{n}\big)\langle\tilde{{\boldsymbol \nu}}_j,{\bf e}_i\rangle\epsilon_j,
\end{align*} where $\tilde{{\bf y}}_1,\cdots, \tilde{{\bf y}}_n$ are Gaussian vectors independent of $\sigma_2, \ldots ,\sigma_n$ and $\epsilon_j=\epsilon_j(\sigma_j)$ are independent identically distributed Bernoulli variables.

Equation (\ref{split variables}) will allow us to apply Proposition \ref{Hoeffding}: For fixed $({\bf z}_1, [{\bf z_2}], \ldots, [{\bf z}_n])$ we can consider the independent random variables (function of  $(\sigma_2,\cdots ,\sigma_n)$)
\begin{align*}
\Big(\big|\langle P_{\tilde{L}_{j-1}}(\tilde{{\bf y}}_j), {\bf e}_i\rangle\big|
\big(\| P_{\tilde{L}^{\perp}_{j-1}}(\tilde{{\bf y}}_j\big ) \|-\sqrt{n}\big)\langle\tilde{{\boldsymbol \nu}}_j,{\bf e}_i\rangle\epsilon_j(\sigma_j)\Big)_{j=1}^m,
\end{align*}
Then, Proposition \ref{Hoeffding} gives us that, for fixed $({\bf z}_1, [{\bf z_2}], \ldots, [{\bf z}_n])$.
\begin{equation}\label{cotachernoff}
\mu^{\otimes m} \Big(|\langle G_i^m,H_i^m\rangle| \geq a \Big)\leq 2e^{-\frac{a^2}{2\sum_{j=1}^m \langle P_{\tilde{L}_{j-1}}(\tilde{{\bf y}}_j), {\bf e}_i\rangle^2  (
\| P_{\tilde{L}^{\perp}_{j-1}}(\tilde{{\bf y}}_j) \|-\sqrt{n})^2 \langle \tilde{{\boldsymbol \nu}}_j,{\bf e}_i\rangle^2}}.
\end{equation}

We consider first the case $\alpha<1$. 

It follows from Lemma \ref{aux1}, Proposition \ref{concentraciondenormas}, Lemma \ref{yaveremos} and a union bound argument that 
\begin{align}\label{eqalpha1}&2\sum_{j=1}^m \langle P_{\tilde{L}_{j-1}}(\tilde{{\bf y}}_j), {\bf e}_i\rangle^2  \big(\| P_{\tilde{L}^{\perp}_{j-1}}(\tilde{{\bf y}}_j) \|-\sqrt{n}\big)^2 \langle \tilde{{\boldsymbol \nu}}_j,{\bf e}_i\rangle ^2 \leq 
\\ &\nonumber \leq 2\sum_{j=1}^m m^\epsilon \frac{j-1}{n}\big(\sqrt{n} -(1-\rho)^{\frac{1}{2}}\sqrt{n-j+1}\big)^2\frac{m^\epsilon}{n}\leq m^{2\epsilon+1}
\end{align} 
with probability larger than $$1-C(m,\rho,\epsilon):=1-2me^{-(\frac{m^{\frac{\epsilon}{2}}-2}{4})}+m \left( e^{-\frac{\rho^2 ((1/\alpha-1)m+1)}{4}}+ e^{-\frac{\rho^2 ((1/\alpha-1)m+1)}{16}} \right)+ \sqrt{e}me^{\frac{1}{4}m^{2\epsilon}}.$$
(The second inequality in the second line of (\ref{eqalpha1}) follows from easy calculations).

We note now that $Pr \Big(|\langle G_i^m,H_i^m\rangle| \geq a \Big)$ is upper bounded by
\begin{align*}
Pr \Big(|\langle G_i^m,H_i^m\rangle| \geq a \,\Big| \,  2\sum_{j=1}^m \langle P_{\tilde{L}_{j-1}}(\tilde{{\bf y}}_j), {\bf e}_i\rangle^2  \big(\| P_{\tilde{L}^{\perp}_{j-1}}(\tilde{{\bf y}}_j) \|-\sqrt{n}\big)^2 \langle \tilde{{\boldsymbol \nu}}_j,{\bf e}_i\rangle ^2 \leq m^{2\epsilon+1} \Big)\\ + Pr\Big( 2\sum_{j=1}^m \langle P_{\tilde{L}_{j-1}}(\tilde{{\bf y}}_j), {\bf e}_i\rangle^2  \big(\| P_{\tilde{L}^{\perp}_{j-1}}(\tilde{{\bf y}}_j) \|-\sqrt{n}\big)^2 \langle \tilde{{\boldsymbol \nu}}_j,{\bf e}_i\rangle ^2 > m^{2\epsilon+1} \Big),
\end{align*}where we denote by $Pr(A|B)$  the probability of the event $A$ conditioned to $B$.

We pick $\epsilon'>\epsilon$ and we fix $a=m^{\frac{1}{2}+\epsilon'}$. Then, Equations (\ref{cotachernoff}) and (\ref{eqalpha1}) imply that  
\begin{equation*}
Pr \Big(|\langle G_i^m,H_i^m\rangle| \geq m^{\frac{1}{2}+\epsilon'} \Big)\leq 2e^{-\frac{m^{1+2\epsilon'}}{m^{2\epsilon}+1}}+ C(m,\rho,\epsilon)=2e^{m^{-2(\epsilon'-\epsilon)}}+C(m,\rho,\epsilon),
\end{equation*}which tends exponentially fast to zero as $n$ grows to infinity.

The case $\alpha=1$ has to be considered separately as the bound in the concentration of Lemma \ref{yaveremos} becomes trivial. In order to overcome this issue we reason as in the proof of Proposition \ref{cotasH}. We can divide the sum in (\ref{eqalpha1}) in two terms
\begin{align*}
&2\sum_{j=1}^{n-\sqrt{n}} \langle P_{L_{j-1}}({\bf y}_j), {\bf e}_i\rangle^2  \left(
\| P_{L^{\perp}_{j-1}}({\bf y}_j) \|-\sqrt{n}\right)^2 \langle{\boldsymbol \nu}_j,{\bf e}_i\rangle^2 \\
+&2\sum_{j=n-\sqrt{n}+1}^n \langle P_{L_{j-1}}({\bf y}_j), {\bf e}_i\rangle^2  \left(
\| P_{L^{\perp}_{j-1}}({\bf y}_j) \|-\sqrt{n}\right)^2 \langle{\boldsymbol \nu}_j,{\bf e}_i\rangle^2,
\end{align*}
where the first summand is treated as previously giving an upper bound of $(n-\sqrt n)^{2\epsilon+1}$ and the second is $O(n^{1/2+2\epsilon})$, which is negligible compared with the first. Proceeding as in the case $\alpha<1$ the result follows.
\end{proof}

\smallskip

We prove now Corollary \ref{correferee}. Before writing the actual proof, we note that considering the  Taylor expansion of the expression in Theorem \ref{main} we obtain  $$2- \frac{4}{3} \frac{(1-(1 -\alpha)^{3/2})}{\alpha}=
\frac{\alpha}{2}+  \frac{\alpha^2}{12}+ O(\alpha^3),$$
so that for $\alpha$ close enough to 0 we would expect $$\|F_i^m\|\approx\sqrt{\left(2- \frac{4}{3} \frac{(1-(1 -\alpha)^{3/2})}{\alpha}\right)m}\approx \sqrt{\frac{\alpha m}{2}}=\sqrt{\frac{m^2}{2n}},$$
as it is indeed the case. 
\begin{proof}[\bf Proof of Corollary \ref{correferee}:] As in the proof of Theorem \ref{main} we consider the decomposition $F_i^{m}=G_i^{m}+ H_i^{m}$ and we treat separately the norms of $G_i^{m}$ and $H_i^{m}$. 

\smallskip

We start with $\|G_i^{m}\|$. Recall that Equation (\ref{concentracionGsup}) says that,  with probability greater than $1-2n\frac m h e^{-\frac{\rho^2 h }{4}}$, 
\begin{equation*}
\sup_i\|G_i^m\|^2\leq \frac{1}{(1-\rho)^3} \frac{m}{2n}\left(m+h\right).
\end{equation*}

If we make, for instance,  the choices $\rho=m^{-\frac{1}{6}} $, $h=m^\frac{1}{2}$, and take into account that  $\frac{1}{(1-\rho)^3}\leq 1+4\rho$ for $\rho$ small enough, we get that 
\[\sup_i\|G_i^m\|^2\leq \frac{m^2}{2n} + O\left(\frac{m^{\frac{11}{6}}}{n}\right),\]
with probability greater than $1-2n m^\frac{1}{2} e^{-\frac{m^\frac{1}{6}}{4}}$.

Similar reasonings show that 
\[\inf_i\|G_i^m\|^2\geq \frac{m^2}{2n} - O\left(\frac{m^{\frac{11}{6}}}{n}\right),\]
with probability greater than $1-2n m^\frac{1}{2} e^{-\frac{m^\frac{1}{6}}{4}}$.

\smallskip

Now we bound the norm of $H_i^m$. It follows from the proof of Proposition \ref{cotasH} (see Equation (\ref{particionH}) and Lemma \ref{yaveremos}) that
\begin{align*} 
\|H_i^m\|^2&=\sum_{j=1}^m \left(\|{\bf y}_j-\Delta_j\|-\sqrt{n}\right)^2 \nu_{j,i}^2  \\
&\leq \sum_{j=1}^m  \left(\sqrt{n} -(1-\rho)^{\frac{1}{2}}\sqrt{n-j+1}\right)^2 \nu_{j,i}^2 
\end{align*} holds for every $i=1,\dots,n$, 
outside of a set 
$Z_1$ with $\mathcal G_{n^2}(Z_1)\leq nm \left( e^{-\frac{\rho^2 (n-m+1)}{4}}+ e^{-\frac{\rho^2 (n-m+1)}{16}} \right)$, 
where we have applied a union bound argument in $i$.

For every $1\leq j \leq m$, we have \[\left(\sqrt{n} -(1-\rho)^{\frac{1}{2}}\sqrt{n-j+1}\right)^2\leq \left(\sqrt{n} -(1-\rho)^{\frac{1}{2}}\sqrt{n-m}\right)^2.\]
Using this  and  Proposition \ref{concentraciondenormas} as in the proof of Proposition \ref{cotasH}, we get that 
\begin{align*} 
\|H_i^m\|^2&\leq   \left(\sqrt{n} -(1-\rho)^{\frac{1}{2}}\sqrt{n-m}\right)^2 \sum_{j=1}^m\nu_{j,i}^2 \\
&\leq \frac{1}{(1-\rho')^2}\frac{m}{n}\left(\sqrt{n} -(1-\rho)^{\frac{1}{2}}\sqrt{n-m}\right)^2,
\end{align*} 
outside of the sets $Z_1$ and $Z'_1$, where $Z_1$ is as above and  $\mathcal G_{n^2}(Z_1')\leq n e^{-\frac{{\rho'}^2 m}{4}}.$ 

We multiply and divide by the conjugate and lower bound the denominator by $n$, and we get

\[\sup_i \|H_i^m\|^2 \leq \frac{1}{(1-\rho')^2} \frac{m^2}{n} \left(\rho^2\frac n m +2\rho +\frac m n\right).\]

If we choose now, for instance, $\rho=\frac{m^\frac{1}{3}}{n^{\frac{1}{2}}}$, $\rho'=m^{-\frac{1}{3}}$ and  using that $\frac m n \rightarrow 0$, we have that
\[\sup_i\|H_i^m\|^2=o\left(\frac {m^2} n\right)\]
with probability exponentially small in $m$. 
Thus, in this regime $\|H_i^m\|$ is negligeable compared to $\|G_i^m\|$. 

\smallskip

Since \[\|G_i^m\|-\|H_i^m\|\leq \|F_i^m\|\leq \|G_i^m\|+ \|H_i^m\|, \] this finishes the proof. 
\end{proof}

\section{The supremum norm}\label{Jiangsthingy}
In this section we use Theorem \ref{main} to prove Theorem \ref{Jiangnuestro}, that describes the asymptotic probabilistic behaviour of $\epsilon_n(m)=\sup_{1\leq i \leq n, 1\leq j \leq m} |y_{i,j}- \sqrt{n}u_{i,j}|$.

If we choose $m_n=\frac{\beta n}{\log n}$ or  $m_n=o\left(\frac{n}{\log n}\right)$ we immediately obtain Corollary \ref{Jiangrefinado}. This result should be compared with \cite[Theorem 3]{Ji}: In it, Jiang showed that if  $Y_n$ is a Gaussian random matrix and $U_n$ is the result of its Gram-Schmidt orthonormalization, then $\epsilon_n(m)$ converges to 0 in probability if and only if $m_n=o\left(\frac{n}{\log n}\right)$, and he also showed that if $m_n=\frac{\beta n}{\log n}$ then $\epsilon_n(m)$ converges in probability to $2\sqrt{\beta}$. 
Our Corollary \ref{Jiangrefinado} shows the existence of couplings between a Gaussian matrix $Y_n$ and a Haar distributed orthogonal matrix $U_n$ such that $\epsilon_n(m)$ also converges to 0 in probability if and only if $m_n=o\left(\frac{n}{\log n}\right)$ but now, when  $m_n=\frac{\beta n}{\log n}$, the upper bound for $\epsilon_n(m)$ converges in probability to  $\sqrt{2\beta}$.

Before we start our reasonings, we state and prove for completeness a lemma which is well known, but for which we have not found an explicit reference.
\begin{lemma}\label{cotaunitaria} Let ${\{\bf w_j}=(w_{ij})_{i=1}^n\}_{j=1}^m$ be m unitary vectors each of which is uniformly distributed in the sphere of $\mathbb R^n$. Then, for any $\epsilon>0$ we have $$Pr\left(\sup_{i,j}|w_{i,j}|> (1+\epsilon)\frac{\sqrt{2\log (nm)}}{\sqrt{n}}\right) \stackrel{n} \longrightarrow 0,$$
independently of the number of vectors $m$. On the other hand, if $m\leq \alpha n$ for some fixed number $\alpha$ we have
 $$Pr\left( \sup_{i=1,...,n} |w_{i,j}|\geq(1-\epsilon)\frac{\sqrt {2\log n}}{\sqrt{n}} \text{   }\text{  for all $j\in\{1,...,m\}$} \right)\stackrel{n}\longrightarrow 1.$$ 
\end{lemma}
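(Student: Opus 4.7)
The plan is to use the standard Gaussian representation: for each $j$, the uniform unit vector $\mathbf{w}_j$ on the sphere of $\R^n$ has the same distribution as $\mathbf{g}_j/\|\mathbf{g}_j\|$ for a standard Gaussian vector $\mathbf{g}_j\in\R^n$; in particular each coordinate satisfies $w_{i,j}=g_{i,j}/\|\mathbf{g}_j\|$ in law. This reduces the problem to the interplay between the Gaussian tail of a single coordinate $g_{i,j}$ (Lemma \ref{cotaGaussiana}) and the concentration of $\|\mathbf{g}_j\|$ around $\sqrt{n}$ (Proposition \ref{concentracion}). Note that only these marginal representations are used for each $j$, so no independence among the $\mathbf{w}_j$'s is required; all combinatorial bounds below are ordinary union bounds.

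For the upper bound, fix $\delta=\delta(\epsilon)>0$ small enough that $(1+\epsilon)^2(1-\delta)>1+\epsilon/2$, and use the inclusion $\{|w_{i,j}|>(1+\epsilon)\sqrt{2\log(nm)}/\sqrt{n}\}\subseteq\{|g_{i,j}|>(1+\epsilon)\sqrt{2(1-\delta)\log(nm)}\}\cup\{\|\mathbf{g}_j\|<\sqrt{n(1-\delta)}\}$. By Lemma \ref{cotaGaussiana}, the first event has probability $O((nm)^{-(1+\epsilon)^2(1-\delta)}/\sqrt{\log(nm)})$, so the union bound over the $nm$ pairs $(i,j)$ contributes $O((nm)^{-\epsilon/2}/\sqrt{\log(nm)})\to 0$. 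By Proposition \ref{concentracion}, the second event has probability at most $e^{-\delta^2 n/4}$ per $j$, so its union bound contributes $m\,e^{-\delta^2 n/4}$, which vanishes for any $m$ subexponential in $n$. To obtain the full uniformity in $m$ stated in the lemma, one replaces this splitting by the sharper single-coordinate tail $Pr(|w_{1,j}|>s)\leq Cs\sqrt{n}\,e^{-(n-1)s^2/2}$ (which can be deduced from L\'evy's concentration on the sphere, or equivalently from an optimized Chernoff bound on $w_{1,j}^2\sim\mathrm{Beta}(1/2,(n-1)/2)$), after which a single $nm$-fold union bound on coordinates suffices for arbitrary $m$; note that Proposition \ref{concentraciondenormas}'s bound, with exponent $k/4$ instead of $(n-1)/2$, is not sharp enough.

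For the lower bound, Proposition \ref{concentracion} gives $\|\mathbf{g}_j\|\leq\sqrt{n(1+\delta)}$ with probability at least $1-e^{-c_\delta n}$, and on this event $\sup_i|w_{i,j}|\geq(\sup_i|g_{i,j}|)/\sqrt{n(1+\delta)}$. Since $g_{1,j},\dots,g_{n,j}$ are $n$ i.i.d.\ standard Gaussians, the classical extreme-value computation based on the lower Gaussian tail bound in Lemma \ref{cotaGaussiana} yields $Pr(\sup_i|g_{i,j}|<(1-\delta)\sqrt{2\log n})\leq(1-\Theta(n^{-(1-\delta)^2}/\sqrt{\log n}))^n\leq\exp(-\Theta(n^{2\delta-\delta^2}/\sqrt{\log n}))$, which is superpolynomially small in $n$. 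Choosing $\delta$ small enough that $(1-\delta)/\sqrt{1+\delta}>1-\epsilon$ and taking a union bound over the $m\leq\alpha n$ values of $j$ finishes the argument. The main obstacle is the uniformity in $m$ of the first assertion, which forces the replacement of the naive Gaussian splitting by a sharp per-coordinate spherical tail; beyond this, everything reduces to standard Gaussian concentration and extreme-value theory.
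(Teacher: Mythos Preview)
Your proposal is correct and follows essentially the same approach as the paper. For the upper bound, the paper goes directly to L\'evy's lemma on the sphere to obtain $Pr(|w_{i,j}|>t)\leq\sqrt{\pi/2}\,e^{-(n-1)t^2/2}$ and then applies the $nm$-fold union bound, which is exactly the ``sharper single-coordinate tail'' you invoke after noting that the naive Gaussian splitting fails for general $m$; for the lower bound, both arguments use the Gaussian representation, the lower Gaussian tail of Lemma~\ref{cotaGaussiana} to control $\sup_i|g_{i,j}|$, concentration of $\|\mathbf g_j\|$, and a union bound over $j$.
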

\begin{proof}
In order to prove the first expression, we consider the function that projects a unitary vector in $\mathbb R^n$ onto its $i$th coordinate. This function has Lipschitz constant $1$ and its median is $0$. Thus, a straightforward consequence of Levy's lemma \cite{MiSc} shows that, for $1\leq i\leq n$ and $1\leq j\leq m$,
$$Pr\left(|w_{i,j}|>t\right)\leq \sqrt{\frac \pi 2 } e^{-(n-1)t^2/2}.$$
Taking $t=(1+\epsilon)\frac{\sqrt{2\log (nm)}}{\sqrt{n}}$ and applying a union bound we get
$$ Pr\left(\sup_{i,j} |w_{i,j}|>(1+\epsilon)\frac{\sqrt{2\log (nm)}}{\sqrt{n}}\right)\leq \sqrt{\frac \pi 2 } (nm)^{-2\epsilon-\epsilon^2 + \frac {(1+\epsilon)^2}{n}}\stackrel{n} \longrightarrow 0.$$
For the second part of the statement we first consider an arbitrary Gaussian vector ${\bf g}=(g_{1},...,g_{n}) \in \mathbb R^n$. It follows from the independence of the coordinates of ${\bf g}$  and the lower bound in Lemma \ref{cotaGaussiana} that, for any $t>0$, 
\begin{equation}\label{aux0}Pr\left( \sup_{i=1,...,n} |g_{i}|<t\right) = \left(Pr\left( |g_{1}|<t\right)\right)^n \leq \left(1- \frac 2 {\sqrt {2\pi}} \frac {t} {1+t^2}e^{-t^2/2}\right)^n.
\end{equation}

Using the fact that a uniform unitary vector distributes like a normalized Gaussian vector together with a union bound, we deduce that for every $t,s>0$ we have
\begin{align*}
&Pr\left(\exists j\in\{1,...,m\}: \sup_{i} |w_{i,j}|<t\right)
\\&\leq m Pr\left( \sup_{i} |w_{i,1}|<t\right)=m Pr\left(  \frac{\sup_{i}|g_{i}|}{\|{\bf g}\|}<t\right)\\
&\leq m Pr\left( \big\{\sup_{i}|g_{i}|<ts \big\} \cap \big\{ \|{\bf g}\|\leq s\big\} \right) + m Pr\left( \|{\bf g}\|>s\right)\\&\leq m Pr\left(\sup_{i}|g_{i}|<ts \right) +m Pr\left(\|{\bf g}\|>s\right).
\end{align*}

We fix $t=(1-\epsilon)\frac{\sqrt{2\log n}}{\sqrt{n}}$, $s=\frac {\sqrt{n}}{\sqrt{1-\delta}}$ and we apply Proposition \ref{concentracion} to get that, for $0<\delta<1$, $0<\epsilon<1$, with $\frac{(1-\epsilon)}{\sqrt{1-\delta}}<1$,
\begin{align*}
&Pr\left(\exists j\in\{1,...,m\} : \sup_{i} |w_{i,j}|<(1-\epsilon)\frac{\sqrt{2\log n}}{\sqrt{n}}\right)\\& \leq m Pr\left( \sup_{i}  |g_{i}|<\frac{(1-\epsilon)}{\sqrt{1-\delta}}\sqrt {2\log n}\right)+m e^{-\delta^2n/4}.
\end{align*}

Using Equation (\ref{aux0}) we easily conclude that
$$Pr\left( \exists j\in\{1,...,m\} : \sup_{i=1,...,n} |w_{i,j}| <(1-\rho)\frac{\sqrt {2\log n}}{\sqrt n}\right) \stackrel{n}\longrightarrow 0,$$
provided that $m$ grows as in the statement. Considering the complementary event we obtain the desired result. 
\end{proof}%

Theorem \ref{main} gives us control over the Euclidean norm of the rows $F_i^m$ of $Y_n-\sqrt{n}U_n$. We will use these estimates to obtain information about the supremum $$\epsilon_n(m)=\sup_{1\leq i \leq n, 1\leq j \leq m} |y_{i,j}-\sqrt{n}u_{i,j}|.$$

First of all we note that $U_n$ is the Gram-Schmidt orthonormalization of $Y_n$. Therefore, the columns of  $Y_n-\sqrt{n}U_n$ are not equally distributed. For instance, it is very easy to see that, with very high probability, their Euclidean norm is strictly increasing. In turn, this implies that the coordinates of each of the $F_i^m$ are not equally distributed. To avoid this problem, we will randomly choose a slightly better coupling than the one given by the Gram-Schmidt orthonormalization procedure. 
\begin{proof}[\bf Proof of Theorem \ref{Jiangnuestro}:]
Let $Y_n, U_n$ be as in Theorem \ref{main}. We consider a Haar distributed orthogonal random matrix  $V_m\in \mathcal O(m)$. We define the orthogonal matrix $V=(v_{i,j})_{i,j=1}^n \in \mathcal O(n)$ by 
$$V=
 \begin{pmatrix}
  V_m & 0  \\
  0 & I_{n-m}
 \end{pmatrix}.
$$

We now define $Y'=YV$, $U'=UV$. Due to the orthogonal invariance of both the Gaussian distribution and the Haar distribution, we have that $Y'$ is a random Gaussian matrix and $U'$ is Haar distributed in the orthogonal group. Note that $U'$ is {\em not} the Gram-Schmidt orthonormalization of $Y'$. 

We have now that $Y_n'-\sqrt{n}U_n'=(Y_n-\sqrt{n}U_n) V$. Call $F_{i,j}$ to the $j^{th}$ coordinate of the vector $F_i^m$ defined as in Theorem \ref{main}. Then the first $m$ coordinates of the $i^{th}$ row of  $Y_n'-\sqrt{n}U_n'$ form the vector ${\bf x}_i=(x_{i,j})_{j=1}^m$, where $$x_{i,j}=\sum_{k=1}^m F_{i,k} v_{k,j}.$$

Therefore ${\bf x}_i\in \mathbb R^m$ is a vector whose direction is uniformly distributed and it verifies $\|{\bf x}_i\|=\|F_i^m\|$. That is, for every $1\leq i \leq n$, $\frac {{\bf x}_i} {\|F_i^m\|}$ is a unitary vector uniformly distributed.

We will first prove the upper bound of $\epsilon_n(m)$. It follows from the first part of  Lemma \ref{cotaunitaria} that, for every $t>0$,  $$Pr\left(\sup_{i,j} \frac {|x_{i,j}|}{\|F_i^m\|}> (1+\varepsilon)\frac{\sqrt{2\log (nm)}}{\sqrt{m}}\right)\stackrel {n}\longrightarrow 0.$$
We have that
\begin{equation}\label{supremumsup}
Pr\left( \epsilon_n(m)>(1+\varepsilon)\sup_i\|F_i^m\|\frac{\sqrt{2\log (nm)}}{\sqrt{m}} \right)\stackrel {n}\longrightarrow 0.
\end{equation}

We recall that, according to Theorem \ref{main}, there exists $0<\delta<\frac{1}{2}$ such that 
$$\sup_i\|F_i^m\| \leq \sqrt{  \left(2- \frac{4}{3} \frac{(1-(1 -\alpha)^{3/2})}{\alpha}\right)m} + O(m^{\frac{1}{2}-\delta}) := \sqrt{\varphi(\alpha) m }+ O(m^{\frac{1}{2}-\delta})$$
with probability exponentially close to 1. Putting this together with Equation (\ref{supremumsup}) we get the upper bound.

For the lower bound, we will consider the column vectors ${\bf \tilde x}_j=(x_{i,j})_{i=1}^n$, with $j=1,\cdots, m$, corresponding to the matrix $Y_n'-\sqrt{n}U_n'$. We first note that the normalized vectors $\frac{{\bf \tilde x}_j}{\|{\bf \tilde x}_j\|}$ can be assumed to be uniformly distributed for every $j$. Indeed, this follows from the fact that the initial coupling defined via the Schmidt orthonormalization procedure is invariant under left multiplication by an orthogonal matrix.  Since left and right multiplication are commutative operations, the invariance under left multiplication by an orthogonal matrix also holds for the random matrix $Y_n'-\sqrt{n}U_n'$. Then, according to Lemma \ref{cotaunitaria} we have
\begin{align*}
Pr\left( \sup_{i=1,...,n} |x_{i,j}|\geq(1-\epsilon)\|{\bf \tilde x}_j\|\frac{\sqrt {2\log n}}{\sqrt{n}} \text{   }\text{  for all $j\in\{1,...,m\}$} \right)\stackrel{n}\longrightarrow 1.
\end{align*}

Now, we will use the trivial fact that $\sum_{j=1}^m\|{\bf \tilde x}_j\|^2=\sum_{i=1}^n\|{\bf x}_i\|^2=\sum_{i=1}^n\|F_i^m\|^2$ and the lower bound provided in Theorem \ref{main} to conclude that $\sum_{j=1}^m\|{\bf \tilde x}_j\|^2\geq n\big(\sqrt{\varphi(\alpha)m}-O(m^{\delta})\big)^2$, so 
\begin{align*}
\sup_{j=1,\cdots, m} \|{\bf \tilde x}_j\|\geq  \sqrt{n}\big(\sqrt{\varphi(\alpha)}-O(m^{\delta-\frac{1}{2}})\big),
\end{align*} with probability exponentially close to 1 as $n$ grows to infinity. Putting the two previous estimates together we easily deduce that 
\begin{equation*}
Pr\left(\epsilon_n(m)\geq(1-\varepsilon) \left(\sqrt{\varphi(\alpha) }- O(m^{\delta-\frac{1}{2}})\right) \sqrt {2\log n}\right)\stackrel{n}\longrightarrow 1,
\end{equation*}
which finishes the proof.
\end{proof}
\begin{remark}
We expect the lower bound of Theorem \ref{Jiangnuestro} to be  $\sqrt{\varphi(\alpha)} \sqrt{2\log (nm)}$. To prove that, one needs to overcome the lack of independence of the rows $F_i^m$. 
\end{remark}

In our previous results, $\alpha=\frac{m}{n}$ was a constant number. One can easily check that the proof of Theorem \ref{Jiangnuestro} works in a more general context. In particular, the same argument works if one considers $m_n=[\beta \frac{n}{\ln n}]$ for any constant $\beta>0$. Note that in this case $\alpha_n=\frac{\beta}{\ln n}\stackrel{n}\rightarrow 0$. With this at hand, we can state and prove  the announced improvement of \cite[Theorem 3]{Ji}.
\begin{corollary}\label{Jiangrefinado}
For each $n \geq  2$, there exist matrices $Y'_n = (y'_{ij})_{i,j=1}^n$ and 
$U'_n = (u'_{i,j})_{i,j=1}^n$ whose $2n^2$ entries are real random variables defined on the same probability space such that
\begin{itemize}
\item[(i)] the law of $U'_n$ is the normalized Haar measure on the orthogonal  group $\mathcal O(n)$; 

\item[(ii)] $\{ y'_{i,j}; 1\leq i, j\leq n\}$ are independent standard normals;
\item[(iii)] set $$\epsilon_n(m)=\max_{1\leq i \leq  n, \, 1 \leq j \leq m}  |\sqrt{n} u'_{i,j}-y'_{i,j}|$$  for  $m = 1, 2, \cdots, n$. Then, for any constant $\beta>0$ and $m_n=[\frac{\beta n}{\log n}]$, for any $\varepsilon>0$, $\epsilon_n(m)$ belongs to the interval $$(\sqrt{\beta}-\varepsilon, \sqrt{2\beta}+ \varepsilon)$$ with probability $1-o(1)$. Moreover, if we make $m_n=o(\frac{n}{\ln n})$, then we have that $\epsilon_n(m)\rightarrow 0$ in probability as $n\rightarrow \infty$. 
\end{itemize}

\end{corollary}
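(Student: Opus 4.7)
The plan is to run the argument of Theorem \ref{Jiangnuestro} in the regime $m=m_n$ with $\alpha_n=m_n/n\to 0$, substituting Corollary \ref{correferee} for Theorem \ref{main} wherever Euclidean-norm control of the rows $F_i^m$ is needed. The randomized coupling $Y'_n=Y_nV$, $U'_n=U_nV$ with $V$ a block Haar orthogonal matrix acting on the first $m$ coordinates continues to turn each truncated row of $Y'_n-\sqrt{n}U'_n$ into a vector of Euclidean norm $\|F_i^{m_n}\|$ whose direction is uniform on the sphere of $\mathbb R^{m_n}$. Therefore the supremum-versus-Euclidean comparison \eqref{supremumsup} and the column-side estimate from the second half of Lemma \ref{cotaunitaria} carry over without change, yielding with probability $1-o(1)$
\[
(1-\varepsilon)\sup_{j}\|\tilde{\mathbf{x}}_j\|\,\frac{\sqrt{2\log n}}{\sqrt{n}}\;\le\;\epsilon_n(m_n)\;\le\;(1+\varepsilon)\sup_{i}\|F_i^{m_n}\|\,\frac{\sqrt{2\log(nm_n)}}{\sqrt{m_n}}.
\]

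Next I would plug in the estimates of Corollary \ref{correferee}, which are designed precisely for $\alpha_n\to 0$. They give $\sup_i\|F_i^{m_n}\|=(1+o(1))m_n/\sqrt{2n}$, and a pigeonhole step applied to $\sum_j\|\tilde{\mathbf{x}}_j\|^2=\sum_i\|F_i^{m_n}\|^2\ge(1-o(1))m_n^2/2$ (exactly as in the proof of Theorem \ref{Jiangnuestro}) yields $\sup_j\|\tilde{\mathbf{x}}_j\|\ge(1-o(1))\sqrt{m_n/2}$. Specializing to $m_n=[\beta n/\log n]$ one has $m_n/\sqrt{2n}=(1+o(1))\sqrt{\beta n/(2(\log n)^2)}$ and $\sqrt{2\log(nm_n)}=(1+o(1))\,2\sqrt{\log n}$ since $\log m_n=\log n+O(\log\log n)$. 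Substituting gives
\[
\epsilon_n(m_n)\le(1+\varepsilon)\sqrt{\tfrac{\beta}{2\log n}}\cdot 2\sqrt{\log n}\,(1+o(1))=(1+\varepsilon)\sqrt{2\beta}\,(1+o(1)),
\]
\[
\epsilon_n(m_n)\ge(1-\varepsilon)\sqrt{\tfrac{m_n\log n}{n}}\,(1-o(1))=(1-\varepsilon)\sqrt{\beta}\,(1+o(1)),
\]
and adjusting $\varepsilon$ places $\epsilon_n(m_n)$ inside $(\sqrt{\beta}-\varepsilon,\sqrt{2\beta}+\varepsilon)$ with probability $1-o(1)$. For the regime $m_n=o(n/\log n)$ only the upper inequality is needed, and it becomes
\[
\epsilon_n(m_n)\le(1+\varepsilon)\frac{m_n}{\sqrt{2n}}\cdot\frac{\sqrt{2\log(nm_n)}}{\sqrt{m_n}}\,(1+o(1))=O\bigl(\sqrt{m_n\log n/n}\bigr)=o(1),
\]
so $\epsilon_n(m_n)\to 0$ in probability.

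The main obstacle I anticipate is a uniformity check: one must verify that the proof of Theorem \ref{Jiangnuestro} survives when $\alpha_n$ is allowed to vary with $n$ and to tend to zero. This reduces to two observations. First, Corollary \ref{correferee} holds with probability $1-Cne^{-m_n^{k}}$, which remains $1-o(1)$ as long as $m_n$ grows at least polylogarithmically in $n$, and in particular for $m_n=[\beta n/\log n]$. Second, Lemma \ref{cotaunitaria} is quantitative in $n$ rather than in $\alpha_n$, so the Lévy-type concentration used to pass from Euclidean to supremum norm is insensitive to $\alpha_n\to 0$. Neither step needs a new idea, so once these routine uniformity remarks are in place the corollary follows from the computations above.
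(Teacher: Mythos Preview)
Your proposal is correct and follows essentially the same route as the paper: rerun the argument of Theorem \ref{Jiangnuestro} with $\alpha_n\to 0$, feeding in the $\alpha_n\to 0$ row-norm estimates to get $\sup_i\|F_i^{m_n}\|\sim m_n/\sqrt{2n}$ and then reading off the sup-norm bounds via Lemma \ref{cotaunitaria}. The only cosmetic difference is that the paper cites Theorem \ref{Jiangnuestro} directly and uses the elementary inequality $\varphi(\alpha)\le \alpha/2+\alpha^2$ to simplify, whereas you invoke Corollary \ref{correferee}; both reduce to $\varphi(\alpha_n)\sim \alpha_n/2$ and give the same arithmetic. (There is a harmless typo in your intermediate expression $m_n/\sqrt{2n}=(1+o(1))\sqrt{\beta n/(2(\log n)^2)}$, which should have $\beta^2$ under the root, but your subsequent lines are correct.)
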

\begin{proof} Let $Y'_n$, $U'_n$ be as in Theorem \ref{Jiangnuestro}. As previously discussed, Theorem \ref{Jiangnuestro} also holds for $m_n=[ \frac{\beta n}{\ln n}]$ with $\beta>0$ and in this case $\alpha_n=\frac{\beta}{\ln n}$. Then, there exists $0<\delta<\frac{1}{2}$ such that for any $\varepsilon',\varepsilon''>0$ and for $n$ large enough we have 
$$\epsilon_n(m)\leq(1+\varepsilon') \left(\sqrt{\frac{\alpha_n }{2}+ \alpha_n^2} + O(m_n^{-\delta})\right) \sqrt {2\log (nm_n)}\leq (1+\varepsilon')(\sqrt{2 \beta} + \varepsilon'')$$
with probability tending to $1$. Here, we have used that for any $0<\alpha\leq1$ the function $\varphi(\alpha)=\left(2- \frac{4}{3} \frac{(1-(1 -\alpha)^{3/2})}{\alpha}\right)$ is upper bounded by $\frac  \alpha 2 +\alpha^2$. This proves the upper bound in the first part of the $[\frac{\beta n}{\log n}]$ statement. For the lower bound we reason similarly using the lower bound in Theorem \ref{Jiangnuestro}. The statement about $m_n=o(\frac{n}{\ln n})$ follows straightforward by considering a sequence $b_n\rightarrow 0$ and the previous estimate.
\end{proof}
\section*{Acknowledgments}

We would like to thank the referee for many useful comments which helped to improve the readability of this paper and for suggesting us the explicit inclusion of Corollary 1.2.

\begin{sloppypar}
Author's research was supported by Spanish research projects MINECO (grants MTM2011-26912 and MTM2014-54240-P), Comunidad de Madrid (grant QUITEMAD+-CM, ref. S2013/ICE-2801), ``Ram\'on y Cajal" program and \mbox{ICMAT} Severo Ochoa project SEV-2015-0554 (MINECO), and the European CHIST-ERA project CQC (funded partially by MINECO grant PRI-PIMCHI-2011-1071).
\end{sloppypar}
\vskip 0.5 cm
\hfill \noindent \textbf{C. E. Gonz\'alez-Guill\'en} \\
\null \hfill Instituto de Matem\'atica Interdisciplinar, IMI\\
\null \hfill Departamento de Matem\'aticas del \\ \null \hfill
\'Area Industrial, E.T.S.I. Industriales, UPM \\ \null \hfill 
28006 Madrid, Spain
\\ \null \hfill\texttt{carlos.gguillen@upm.es}
\vskip 0.5cm
\hfill \noindent \textbf{Carlos Palazuelos} \\
\null \hfill Instituto de Ciencias Matem\'aticas, ICMAT\\
\null \hfill Facultad de Ciencias Matem\'aticas\\ \null \hfill
Universidad Complutense de Madrid \\ \null \hfill Plaza de Ciencias s/n.
28040, Madrid. Spain
\\ \null \hfill\texttt{carlospalazuelos@ucm.es}
\vskip 0.5cm
\hfill \noindent \textbf{I. Villanueva} \\
\null \hfill Instituto de Matem\'atica Interdisciplinar, IMI\\
\null \hfill Facultad de Ciencias Matem\'aticas\\ \null \hfill
Universidad Complutense de Madrid \\ \null \hfill Plaza de Ciencias s/n.
28040, Madrid. Spain
\\ \null \hfill\texttt{ignaciov@ucm.es}

\end{document}